\newcommand{\dup}[2]{\langle#1,#2 \rangle}
\newcommand{\ip}[2]{(#1,#2)}
\newcommand{\Co}{\mathrm{Co}} 
\newcommand{\cdual}{{\ast}}
\newcommand{\supp}{\mathrm{supp}}
\renewcommand{\theenumi}{\alph{enumi}} 
\newtheorem{theorem}{Theorem}[section]
\newtheorem{lemma}[theorem]{Lemma}
\newtheorem{assumption}[theorem]{Assumption}
\newtheorem{corollary}[theorem]{Corollary}
 \theoremstyle{definition}
\newtheorem{definition}[theorem]{Definition}
\newtheorem{remark}[theorem]{Remark}
\newcommand{\Mid}{\,\Big|\,}
\begin{document}

\title{Sampling in reproducing kernel {B}anach spaces on Lie groups} 
\subjclass[2000]{Primary
  43A15,46E15,94A12; Secondary 46E22} 
\keywords{Sampling, Reproducing kernel Banach spaces,
  Lie Groups, Coorbits} 

\author{Jens Gerlach Christensen} 
\address{2307 Mathematics Building, Department of mathematics, 
  University of Maryland, College Park} 
\email{jens@math.umd.edu}
\urladdr{http://www.math.umd.edu/\textasciitilde jens}

\thanks{The  author gratefully acknowledges support from the
  NSF grant DMS-0801010 and ONR grant NAVY.N0001409103}

\begin{abstract}
We present sampling theorems for reproducing kernel Banach
spaces on Lie groups. Recent approaches to this problem
rely on integrability of the kernel and its local oscillations. 
In this paper we replace the integrability conditions
by requirements on the derivatives of the reproducing kernel.
The results are then used to obtain frames and atomic decompositions
for Banach spaces of distributions stemming from a cyclic representation,
and it is shown that this is particularly easy, when the cyclic vector is
a G{\aa}rding vector for a square integrable representation.
\end{abstract}

\maketitle
\begin{center} {\today}
\end{center}

\section{Introduction}
The classical sampling theorem for band-limited functions 
states that a function can be reproduced from its samples at 
equidistant points. At the core of this statement lies the fact
that a bounded interval has an orthonormal basis of complex exponentials.
Extensions of this theorem for irregular sampling points
have been found using the smoothness of the functions involved
\cite{Grochenig1992,Feichtinger1992,Grochenig1993}. The irregularity 
and density
of the sampling points is connected to the theory of frames 
\cite{Duffin1952,Benedetto1990}: 
A sequence of vectors
$\phi_i$ in a Hilbert space $H$ is called a frame, if there are constants
$0<A\leq B<\infty$ such that
\begin{equation*}
  A \| f\|^2 \leq \sum_i |\ip{f}{\phi_i}|^2 \leq B \| f\|^2
\end{equation*}
for all $f\in H$.
A vector $f$ can be reconstructed by inversion of the frame operator
\begin{equation*}
  Sf = \sum_i \ip{f}{\phi_i}\phi_i
\end{equation*}
A Banach (or Hilbert) space of 
functions on a set $D$ for which
point evaluation is continuous is called a 
reproducing kernel Banach (or Hilbert) space.
Sampling at points $x_i$ provide a frame on a reproducing kernel Hilbert space
$H$ if for all $f\in H$
\begin{equation}
  \label{eq:4}
  A \| f\|^2 \leq \sum_i |c_i f(x_i)|^2 \leq B \| f\|^2
\end{equation}
where $c_i$ are constants.
If this frame inequality is satisfied we can reconstruct
$f$ from its samples $f(x_i)$. For reproducing kernel Banach
spaces the existence of a 
reconstruction operator is not evident from a frame type
inequality. However in \cite{Han2009} it was proven that reconstruction
is possible if for $1\leq p <\infty$ there are 
constants $0<A\leq B <\infty$ such that
\begin{equation}
  A \| f\|^p \leq \sum_i |c_i f(x_i)|^p \leq B \| f\|^p
\end{equation}
for all $f\in B$.
For other types of reproducing kernel Banach spaces more care
has to be taken and a lot more machinery is needed.
The article \cite{Nashed2010} is concerned with reconstruction in
reproducing kernel subspaces of $L^p(\mathbb{R}^n)$
and \cite{Feichtinger1989a,Grochenig1991} 
deals with Banach spaces defined via representations 
of locally compact groups. 
Common for these approaches is that a reproducing kernel is given by 
an integral over a locally compact group
\begin{equation*}
  f(x) = \int_G f(y)K(x,y)\,dy
\end{equation*}
This kernel is assumed to be integrable, i.e. for every $x$
\begin{equation*}
  \int_G |K(x,y)|\,dy < \infty 
\end{equation*}
It is also assumed that for a compact set $U$,
the local oscillations
\begin{equation*}
  M_U K(x,y) = \sup_{u,v\in U} |K(xu,yv)-K(x,y)|
\end{equation*}
satisfy
\begin{equation*}
  \int_G |M_U K(x,y)|\,dy < \infty 
\end{equation*}
These assumptions are not satisfied for band-limited functions, since
the reproducing kernel is the non-integrable $\mathrm{sinc}$-function.
Other non-integrable kernels are known (see for example
the sections about Bergman spaces in 
\cite{Christensen2009,Christensen2010})
and this calls for a sampling theory without
integrability conditions. 
The main idea in this article is to estimate local oscillations
via derivatives, and therefore we restrict our attention to reproducing 
kernel Banach spaces on Lie groups.

Reproducing kernel Banach spaces show up naturally in connection
with square integrable representations, which 
was first noted in the construction of coorbit spaces (see 
\cite{Feichtinger1988,Feichtinger1989a}). 
In \cite{Christensen2009,Christensen2010} this work was generalized
and coorbit spaces were defined as Banach spaces of 
distributions stemming from cyclic representations.
As an application of our sampling theorems we obtain frames and 
atomic decompositions for coorbit spaces arising from cyclic 
(and not necessarily integrable) representations of Lie groups.

\section{Examples with reproducing kernel Hilbert spaces}

In this section we will cover sampling theorems for two cases of
reproducing kernel Hilber spaces on groups. The two groups are
$\mathbb{R}$ and the $(ax+b)$-group.

\subsection{Sampling of band-limited functions}

The Fourier transform is the extension to $L^2(\mathbb{R}^n)$ of
the operator $\mathcal{F}$
\begin{equation*}
  \mathcal{F}f(w) = (2\pi)^{-n/2} \int f(x)e^{-iw\cdot x}\,dx
\end{equation*}
defined for $f\in L^1(\mathbb{R})\cap L^2(\mathbb{R})$.
We will often denote the Fourier transform $\mathcal{F}f$ by $\widehat{f}$.
A function in $L^2(\mathbb{R})$ is called $\Omega$-band-limited if 
$\mathrm{supp}(\widehat{f})\subseteq [-\Omega,\Omega]$.
The space $L^2_\Omega$ of $\Omega$-band-limited functions is a reproducing
kernel Hilbert space and satisfies
\begin{equation*}
  f(x) = \int f(y)\mathrm{sinc}(x-y) \,dy
\end{equation*}
where $$\mathrm{sinc}(x) = \frac{\sin x}{x}$$
Therefore we need only provide a frame inequality like 
\eqref{eq:4} in order to reconstruct $\Omega$-band-limited functions.
In \cite{Grochenig1993} the following irregular sampling theorem for
band-limited functions was used to provide sampling theorems
for the wavelet and short time Fourier transforms.
\begin{theorem}
  \label{thm:10}
  Suppose that $f\in L^2(\mathbb{R})$ and 
  $\mathrm{supp}(\widehat{f}) \subseteq [-\Omega,\Omega]$.
  If $\{x_k \}_{k\in\mathbb{Z}}$ is any increasing sequence such that
  the maximal gap length
  $\delta$ satisfies
  \begin{equation*}
    \delta := \sup_{k\in\mathbb{Z}} (x_{k+1}-x_{k}) < \frac{\pi}{\Omega}
  \end{equation*}
  then
  \begin{equation*}
    (1-\delta\Omega/\pi)^2 \| f\|_2^2
    \leq 
    \sum_k \frac{x_{k+1}-x_{k-1}}{2}|f(x_k)|^2
    \leq
    (1+\delta\Omega/\pi)^2 \| f\|_2^2
  \end{equation*}
\end{theorem}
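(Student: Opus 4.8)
The plan is to recognize the weighted sum as the squared $L^2$-norm of a piecewise constant approximant of $f$ and then to estimate how far this approximant lies from $f$ itself. Concretely, set $w_k = (x_{k+1}-x_{k-1})/2$ and attach to each sample point its Voronoi cell
$$J_k = \left[\frac{x_{k-1}+x_k}{2},\,\frac{x_k+x_{k+1}}{2}\right],$$
so that the $J_k$ tile $\mathbb{R}$ up to endpoints and $|J_k| = w_k$. Defining the step function $g = \sum_k f(x_k)\chi_{J_k}$, the middle term becomes exactly
$$\sum_k w_k |f(x_k)|^2 = \|g\|_2^2.$$
It therefore suffices to prove $\|f-g\|_2 \le (\delta\Omega/\pi)\|f\|_2$: the reverse triangle inequality then yields $(1-\delta\Omega/\pi)\|f\|_2 \le \|g\|_2 \le (1+\delta\Omega/\pi)\|f\|_2$, and squaring (legitimate since $\delta\Omega/\pi<1$) gives the claim. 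Because $\widehat f$ is supported on a compact set, $f$ is smooth with $f,f' \in L^2$, so all point values and integrals below are well defined.

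The two analytic inputs are Bernstein's inequality and a one-sided Poincar\'e (Wirtinger) inequality. First I would localize: since $f(x)-f(x_k)$ vanishes at the interior point $x_k \in J_k$, I split $J_k$ at $x_k$ into two subintervals, each of length at most $\delta/2$, on each of which $f-f(x_k)$ vanishes at one endpoint. The sharp Wirtinger inequality for a function $u$ with $u=0$ at one endpoint of an interval of length $\ell$ reads $\int |u|^2 \le (4\ell^2/\pi^2)\int|u'|^2$; applying it to both halves with $\ell \le \delta/2$ and $u'=f'$ gives
$$\int_{J_k}|f(x)-f(x_k)|^2\,dx \le \frac{\delta^2}{\pi^2}\int_{J_k}|f'(x)|^2\,dx.$$
Summing over $k$ and using that the $J_k$ partition $\mathbb{R}$ produces $\|f-g\|_2^2 \le (\delta^2/\pi^2)\|f'\|_2^2$. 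Finally Bernstein's inequality $\|f'\|_2 \le \Omega\|f\|_2$, immediate from Plancherel since $\widehat{f'}(\omega)=i\omega\,\widehat f(\omega)$ and $|\omega|\le\Omega$ on $\supp\widehat f$, closes the estimate and yields $\|f-g\|_2 \le (\delta\Omega/\pi)\|f\|_2$.

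I expect the main obstacle to be pinning down the \emph{sharp} constant $4\ell^2/\pi^2$ in the one-sided Wirtinger inequality and verifying it for complex-valued $f$, since it is precisely this constant, rather than the cruder bound a direct Riemann-sum or oscillation estimate would give (producing only a factor like $2\delta\Omega$), that yields the advertised $\delta\Omega/\pi$. I would obtain it from the eigenvalue problem $-u''=\lambda u$ with a Dirichlet condition at the vanishing endpoint and a Neumann condition at the other, whose lowest eigenvalue is $\pi^2/(4\ell^2)$; applying the bound to the real and imaginary parts of $f$ separately handles the complex case. The remaining bookkeeping, namely that the cells tile the line, that $|J_k|=w_k$, and that each half has length at most $\delta/2$, is routine.
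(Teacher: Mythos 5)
Your proposal is correct and follows essentially the same route the paper indicates: the oscillation bound $\bigl\| f-\sum_k f(x_k)1_{I_k}\bigr\|_{L^2}\le (\delta/\pi)\|f'\|_{L^2}$ via Wirtinger's inequality on the Voronoi cells, followed by Bernstein's inequality and the triangle inequality. The only difference is that you fill in the details (the explicit cell decomposition, the one-sided Wirtinger constant $4\ell^2/\pi^2$, and the treatment of complex values) that the paper leaves as a sketch, and these details are all sound.
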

To prove this it is first shown that
for disjoint intervals $I_k \subseteq (x_k-\delta/2,x_k+\delta/2)$
with $\cup_k I_k = \mathbb{R}$
we have
\begin{equation} \label{eq:1}
  \left\| f - \sum_k f(x_k)1_{I_k}\right\|_{L^2} 
  \leq \frac{\delta}{\pi} \| f' \|_{L^2}
\end{equation}
This inequality follows from an application of Wirtinger's inequality.
Then Bernstein's inequality
$\| f'\|_{L^2} \leq \Omega \| f\|_{L^2}$  is utilized to obtain
the frame inequality of the theorem above.

We now give an alternative approach to 
inequalities resembling \eqref{eq:1}. 
Note that this has already been presented
as Lemma~4 in \cite{Grochenig1992}, however we include the 
calculations here to demnonstrate how they can be generalized.
This is more straight forward
than Wirtinger's inequality and uses the smoothness of band-limited 
functions and the fundamental theorem of calculus. 
Since many reproducing kernel spaces consist of 
differentiable functions this approach will carry over to such spaces. 
Define the local oscillation of a band-limited function
$f$ as
\begin{equation*}
  M^\delta f(x) = \sup_{|u|\leq \delta} |f(x+u)-f(x)|
\end{equation*}
Then an application of H\"older's inequality shows that
\begin{align*}
  M^\delta f(x) 
  &= \sup_{|u|\leq \delta} |f(x+u)-f(x)| \\
  &= \sup_{|u|\leq \delta} \left|\int_{0}^{u} f'(x+t)\,dt \right| \\
  &\leq \sup_{|u|\leq \delta}
  \left( 
    \int_{0}^{|u|} 1 \,dt  \right)^{1/2} 
  \left(
    \int_{0}^{u} |f'(x+t)|^2 \,dt
  \right)^{1/2}  \\
  &\leq 
  \delta  
  \left(   
  \int_{-\delta}^{\delta} |f'(x+t)|^2 \,dt
  \right)^{1/2}
\end{align*}
Applying Minkowski's inequality then gives the following oscillation estimate
\begin{equation}
  \label{eq:2}
  \| M^\delta f\|_{L^2} \leq \sqrt{2}\delta \| f'\|_{L^2}
\end{equation}
From this follows
\begin{equation*}
  \left\| f - \sum_k f(x_k)1_{I_k}\right\|_{L^2} 
  \leq \| M^\delta f\|_{L^2}
  \leq \sqrt{2}\delta \| f' \|_{L^2}
\end{equation*}
and we can again derive a frame inequality by use of Bernstein's inequality.
Note that this estimate is not as sharp as
\eqref{eq:1}, however it has the advantage that
it can be generalized to other groups than $\mathbb{R}$.

In this paper we will derive oscillation estimates similar to 
\eqref{eq:2} for (non-commutative) Lie groups in order to 
obtain sampling theorems. 
In the next subsection we 
work through the details for the non-commutative 
$(ax+b)$-group and show how this provides sampling theorems
for the wavelet transform.

\subsection{Sampling of the wavelet transform}

In this section we present the ideas behind sampling for
reproducing kernel Hilbert space
related to the non-commutative $(ax+b)$-group.
The approach will be
generalized in section~\ref{sec:srkbs}.

Let $G$ be the $(ax+b)$-group which can be realized as a matrix 
group
\begin{equation*}
  G = \left\{ 
  (a,b) = \begin{pmatrix}
    a & b \\ 0 & 1
  \end{pmatrix}
 \Mid a> 0,b\in\mathbb{R}   \right\}
\end{equation*}
The left Haar measure on $G$ is defined by
\begin{equation*}
  C_c(G)\ni f\to \int_{0}^\infty \int_{\mathbb{R}}
  f(a,b) \,\frac{da\,db}{a^2}
\end{equation*}
and we denote by $L^2(G)$ 
the space of square integrable functions with respect to this measure.
For a function $g$ let $g^\vee$ be the function
\begin{equation*}
  g^\vee (x) = g(x^{-1})
\end{equation*}
Convolution of two functions $f,g^\vee \in L^2(G)$ is given by
\begin{equation*}
  f*g(a,b) 
  =  \int_{0}^\infty \int_{\mathbb{R}} 
  f(a_1,b_1) g((a_1,b_1)^{-1}(a,b)) \,\frac{da\,db}{a^2}
\end{equation*}
Assume that $\phi\in L^2(G)$ is a non-zero function for which
$\phi^\vee \in L^2(G)$ and the mapping
\begin{equation*}
  L^2(G) \ni f\mapsto f*\phi \in L^2(G)
\end{equation*}
is continuous. Further assume that $\phi*\phi = \phi$,
then the space $H_\phi =L^2(G)*\phi$ is a reproducing kernel
Hilbert space and the reproducing kernel is given by convolution
with $\phi$. 
In order to obtain oscillation estimates we need some notation 
concerning differentiation.
The Lie algebra of $G$ is
\begin{equation*}
  \mathfrak{g} = \left\{ 
    \begin{pmatrix}
      s & t \\ 0 & 0
    \end{pmatrix} \Mid s,t\in \mathbb{R}
  \right\}
\end{equation*}
and the exponential function is the usual matrix exponential function
\begin{equation*}
  e^A = \sum_{k=0}^\infty A^k/k!
\end{equation*}
For $X\in \mathfrak{g}$ define the differential operator
\begin{equation*}
  Xf(x) = \frac{d}{dt}\Big|_{t=0} f(xe^{tX})
\end{equation*}
Denote by $X_1,X_2$ the basis for the Lie algebra $\mathfrak{g}$ of
$G$ for which 
$$
e^{t X_1} = 
\begin{pmatrix}
  e^t & 0 \\ 0 & 1
\end{pmatrix}
\qquad\text{and}\qquad
e^{tX_2} = 
\begin{pmatrix}
  1 & t \\ 0 & 1
\end{pmatrix}
$$ 
For $\alpha$ a
multi-index of length $k$ with entries $1$ or $2$ 
we define the differential operators $R^\alpha$
\begin{equation*}
  R^\alpha f = X_{\alpha(k)}X_{\alpha(k-1)}\cdots X_{\alpha(1)} f
\end{equation*}

When $\epsilon$ is a positive number we define the neighbourhood 
$U_\epsilon$ of the identity $e$ by
\begin{equation*}
  U_\epsilon = \{ e^{t_1X_1}e^{t_2X_2} \mid -\epsilon < t_1,t_2 < \epsilon \}
\end{equation*}
Choose points $x_i\in G$ such that
$G\subseteq \cup_i x_iU_\epsilon$. Let 
$U_i\subseteq x_iU_\epsilon$ be disjoint sets such that $G=\cup_i U_i$
and denote by $1_{U_i}$ the indicator function for $U_i$.
The following lemma provides an estimate equivalent to \eqref{eq:1}
for the $(ax+b)$-group.
\begin{lemma}
  \label{lem:2}
  If $f\in L^2(G)$ is right differentiable up to order $2$ and 
  $R^\alpha f\in L^2(G)$ for $|\alpha|\leq 2$, then
  \begin{equation*}
    \left\| f - \sum_i f(x_i)1_{U_i} \right\|_{L^{2}}
    \leq C_\epsilon ( \| X_1 f\|_{L^{2}} 
    +  \| X_2 f\|_{L^{2}} + \| X_2 X_1 f\|_{L^{2}} )
  \end{equation*}
  where $C_\epsilon \to 0$ as $\epsilon \to 0$.
\end{lemma}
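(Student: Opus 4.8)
The plan is to bound the left-hand side pointwise by a local oscillation and then control that oscillation by derivatives of $f$, imitating the one-dimensional computation that led to \eqref{eq:2}. First I would make the reduction to an oscillation estimate. Since the sets $U_i$ are disjoint with $G=\cup_iU_i$, and since $x\in U_i\subseteq x_iU_\epsilon$ forces $x_i\in xU_\epsilon^{-1}$, I can write
\begin{equation*}
  \Big\| f-\sum_i f(x_i)1_{U_i}\Big\|_{L^2}^2 = \sum_i \int_{U_i} |f(x)-f(x_i)|^2\,dx \le \int_G |\mathrm{osc}_\epsilon f(x)|^2\,dx,
\end{equation*}
where $\mathrm{osc}_\epsilon f(x)=\sup_{v\in U_\epsilon^{-1}}|f(xv)-f(x)|$. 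Writing $v=e^{s_2X_2}e^{s_1X_1}$ with $|s_1|,|s_2|<\epsilon$, which is precisely $U_\epsilon^{-1}$, the problem becomes a pointwise estimate for $\mathrm{osc}_\epsilon f$ followed by taking $L^2$-norms.

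To estimate the oscillation I would use the fundamental theorem of calculus along the path $x\to xe^{s_2X_2}\to xe^{s_2X_2}e^{s_1X_1}$, which gives
\begin{equation*}
  f(xv)-f(x) = \int_0^{s_2}(X_2f)(xe^{rX_2})\,dr + \int_0^{s_1}(X_1f)(xe^{s_2X_2}e^{rX_1})\,dr.
\end{equation*}
The first integral is harmless: bounding $\int_0^{s_2}$ by $\int_{-\epsilon}^\epsilon$ removes the dependence on $s_2$, and it will ultimately contribute the $\|X_2f\|_{L^2}$ term.

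The main obstacle is the second integral, whose integrand is evaluated at $xe^{s_2X_2}e^{rX_1}$, so the supremum over the ``middle'' parameter $s_2$ cannot simply be moved inside. Here I would invoke the one-dimensional Sobolev-type inequality $\sup_{[-\epsilon,\epsilon]}|\psi|\le \tfrac{1}{2\epsilon}\int_{-\epsilon}^\epsilon|\psi|+\int_{-\epsilon}^\epsilon|\psi'|$ applied to $\psi(s_2)=\int_{-\epsilon}^\epsilon|(X_1f)(xe^{s_2X_2}e^{rX_1})|\,dr$. Differentiating the integrand in $s_2$ is where non-commutativity enters: because $[X_1,X_2]=X_2$ one has $\mathrm{Ad}(e^{-rX_1})X_2=e^{-r}X_2$, hence
\begin{equation*}
  \frac{d}{ds_2}(X_1f)(xe^{s_2X_2}e^{rX_1}) = e^{-r}(X_2X_1f)(xe^{s_2X_2}e^{rX_1}).
\end{equation*}
This is exactly what produces the second-order term $\|X_2X_1f\|_{L^2}$, and it is the step that genuinely forces $f$ to be twice right-differentiable.

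Finally I would take $L^2$-norms in $x$ and interchange norm and integration by Minkowski's integral inequality. Each resulting term is an average of right translates $F(\cdot\,g)$ with $g=e^{s_2X_2}e^{rX_1}$ and $|s_2|,|r|<\epsilon$; since $\int_G|F(xg)|^2\,dx=\Delta(g)^{-1}\|F\|_{L^2}^2$ and the modular function is trivial on the one-parameter group $e^{sX_2}$ and lies in $[e^{-\epsilon},e^{\epsilon}]$ on $e^{sX_1}$ for $|s|<\epsilon$, all these factors are absorbed into the constant. Collecting the estimates yields
\begin{equation*}
  \|\mathrm{osc}_\epsilon f\|_{L^2} \le C_\epsilon\big(\|X_1f\|_{L^2}+\|X_2f\|_{L^2}+\|X_2X_1f\|_{L^2}\big),
\end{equation*}
and tracking the widths of the integration ranges shows $C_\epsilon=O(\epsilon)$, so $C_\epsilon\to0$ as $\epsilon\to0$, which is the claim.
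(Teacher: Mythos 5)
Your argument is correct and follows the same skeleton as the paper's proof: reduce to the local oscillation, telescope along the path $x \to xe^{s_2X_2} \to xe^{s_2X_2}e^{s_1X_1}$, and use $\mathrm{Ad}(e^{-rX_1})X_2 = e^{-r}X_2$ (equivalently $e^{t_2X_2}e^{t_1X_1}=e^{t_1X_1}e^{t_2e^{-t_1}X_2}$) to turn the $s_2$-derivative of $X_1f$ into the mixed derivative $X_2X_1f$ with a factor bounded by $e^{\epsilon}$. The one place you genuinely diverge is the removal of the middle parameter $s_2$ from $\int_0^{s_1}(X_1f)(xe^{s_2X_2}e^{rX_1})\,dr$: the paper anchors at $s_2=0$, bounding $|X_1f(xe^{s_2X_2}e^{rX_1})|$ by $|X_1f(xe^{rX_1})|$ plus a second fundamental-theorem-of-calculus integral of $X_2X_1f$, whereas you average over $s_2$ via the embedding $\sup|\psi|\le \tfrac{1}{2\epsilon}\int|\psi|+\int|\psi'|$. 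These are two faces of the same idea (the averaged inequality is obtained by anchoring at every base point and averaging), and both give contributions of order $\epsilon$ after Minkowski; your version trades the paper's single-integral anchor term $\int_{-\epsilon}^{\epsilon}|X_1f(xe^{rX_1})|\,dr$ for a $\tfrac{1}{2\epsilon}$-normalized double integral, and you correctly check that the normalization still yields $C_\epsilon=O(\epsilon)$. Your explicit use of the modular function to control right translates is just a quantitative version of the paper's appeal to uniform boundedness of right translation on compact sets. The only step you gloss over is that $\psi(s_2)=\int_{-\epsilon}^{\epsilon}|X_1f(xe^{s_2X_2}e^{rX_1})|\,dr$ has the absolute value inside the integral, so one should invoke $\bigl||a|-|b|\bigr|\le|a-b|$ to get $|\psi'|\le \int_{-\epsilon}^{\epsilon} e^{-r}|X_2X_1f(xe^{s_2X_2}e^{rX_1})|\,dr$ almost everywhere rather than differentiating $|X_1f|$ directly; this is routine and does not affect the conclusion.
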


\begin{proof}
  For $x\in x_iU$ there are $s_1$ and $s_2$ in between $-\epsilon$ and
  $\epsilon$ such that $x_i = xe^{s_2X_2} e^{s_1X_1}$.
  Thus we get
  \begin{align*}
    |f(x)-f(x_i)|
    &= |f(x)-f(xe^{s_2X_2} e^{s_1X_1})| \\
    &\leq |f(x)-f(xe^{s_2X_2})|
    + |f(xe^{s_2X_2})-f(xe^{s_2X_2} e^{s_1X_1})| \\
    &= 
    \left| \int_{0}^{s_2} \frac{d}{dt_2} 
      f(x e^{t_2X_2}) \,dt_2  \right|
    + \left| \int_{0}^{s_1} \frac{d}{dt_1} 
      f(x e^{s_2X_2}e^{t_1X_1}) \,dt_1  \right| \\
    &\leq \int_{-\epsilon}^{\epsilon} \left| 
      X_2f(x e^{t_2X_2}) \right| \,dt_2
    + \int_{-\epsilon}^{\epsilon} \left| 
      X_1f(x e^{s_2X_2}e^{t_1X_1}) \right| \,dt_1
  \end{align*}
  Since
  $$
  e^{t_2X_2} e^{t_1X_1}
  = e^{t_1X_1}e^{t_2e^{-t_1}X_2}
  $$
  the term 
  $|X_1f(x e^{s_2X_2}e^{t_1X_1})|$ can be estimated by
  \begin{align*}
    | X_1 f(x e^{s_2X_2} e^{t_1X_1})| 
    &= | X_1 f(x e^{t_1X_1} e^{s_2e^{-t_1}X_2})| \\
    &\leq | X_1 f(x e^{t_1X_1}e^{s_2e^{-t_1}X_2}) 
    - X_1 f(x e^{t_1X_1} )| +  |X_1 f(x e^{t_1X_1} )| \\
    &=
    \left| \int_{0}^{s_1} 
    \frac{d}{dt_2} X_1 f(x e^{t_1X_1} e^{t_2e^{-t_1}X_2}) \,dt_2 \right|
    +  |X_1 f(x e^{t_1X_1})| \\
    &=
    \left| \int_{0}^{s_1} 
    e^{-t_1}X_2 X_1 
    f(x e^{t_1X_1} e^{t_2e^{-t_1}X_2}) \,dt_2 \right|
    +  |X_1 f(x e^{t_1X_1})| \\
    &=
    \left| \int_{0}^{s_1} 
    e^{-t_1}X_2 X_1 
    f(x e^{t_2 X_2}e^{t_1X_1}) \,dt_2 \right|
    +  |X_1 f(x e^{t_1X_1})| \\
    &\leq   
    e^{\epsilon}
    \int_{-\epsilon}^{\epsilon} 
    \left|X_2 X_1 f(x e^{t_2X_2} e^{t_1X_1})\right| 
    \,dt_2
    +  |X_1 f(x e^{t_1X_1})| \\
  \end{align*}  
  We therefore obtain the following estimate for $|f(x)-f(x_i)|$:
  \begin{align*}
    |f(x)-f(x_i)|
    &\leq \int_{-\epsilon}^{\epsilon} \left| 
      X_2f(x e^{t_2X_2}) \right| \,dt_2
    + \int_{-\epsilon}^{\epsilon} 
    |X_1 f(x e^{t_1X_1})| \,dt_1 \\
    &\qquad+ e^{\epsilon} 
    \int_{-\epsilon}^{\epsilon} \int_{-\epsilon}^{\epsilon} 
    \left|X_2 X_1 f(x e^{t_2X_2} e^{t_1X_1})\right| 
    \,dt_2\,dt_1
  \end{align*}
  This expression no longer depends on $i$ and using 
  Fubini's theorem to change the order of integration
  (or Minkowski to move the $L^{2}$-norm inside
  the integrals over $t_1$ and $t_2$) we get
  \begin{align*}
   \left\| f - \sum_i f(x_i)1_{U_i} \right\|_{L^{2}}
   &\leq
   \left\|
     \int_{-\epsilon}^{\epsilon}  
       r_ {e^{t_2X_2}} |X_2f|  \,dt_2
   \right\|_{L^{2}} \\
   &\qquad+ 
   \left\|
     \int_{-\epsilon}^{\epsilon} 
     r_ {e^{t_1X_1}} |X_1 f| \,dt_1
   \right\|_{L^{2}} \\
   &\qquad+
   e^{\epsilon} \left\|
    \int_{-\epsilon}^{\epsilon} \int_{-\epsilon}^{\epsilon} 
    r_ {e^{t_2X_2} e^{t_1X_1}} |X_2 X_1 f | 
    \,dt_2\,dt_1
   \right\|_{L^{2}} \\
   &\leq
   \int_{-\epsilon}^{\epsilon}  
   \| r_ {e^{t_2X_2}} X_2f \|_{L^2}  \,dt_2\\
   &\qquad+ 
     \int_{-\epsilon}^{\epsilon} 
     \| r_ {e^{t_1X_1}} X_1 f\|_{L^{2}} \,dt_1
    \\
   &\qquad+
   e^{\epsilon} 
    \int_{-\epsilon}^{\epsilon} \int_{-\epsilon}^{\epsilon} 
    \| r_ {e^{t_2X_2} e^{t_1X_1}} X_2 X_1 f \|_{L^{2}}
    \,dt_2\,dt_1 \\
   &\leq C_\epsilon ( \| X_1 f\|_{L^{2}} +
   \| X_2 f\|_{L^{2}} 
   +  \| X_2X_1 f\|_{L^{2}} )
 \end{align*}
 where we have also used that right translation inside
 $L^{2}(G)$ is continuous. Note that by the above calculations
 $C_\epsilon \to 0$ when $\epsilon \to 0$.


\end{proof}

We are now able to obtain the following sampling result
for reproducing kernel subspaces of $L^2(G)$.
\begin{theorem}
  \label{thm:1}
  If right differentiation up to order $2$
  is continuous on $H_\phi =L^2(G)*\phi$, then
  we can choose points $x_i$ such that
  the norms $\| f\|_{L^2}$ and $\| \{ f(x_i)  \}\|_{\ell^2}$
  are equivalent. Thus $\ell_{x_i}\phi^\vee$ forms a frame for $H_\phi$.
\end{theorem}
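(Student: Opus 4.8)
The plan is to combine the oscillation estimate of Lemma~\ref{lem:2} with a Bernstein-type inequality valid on $H_\phi$, mimicking the band-limited case. First I would observe that the samples $f(x_i)$ are recovered by the reproducing kernel: since $f=f*\phi$ for $f\in H_\phi$, point evaluation is $f(x_i)=\dup{f}{\ell_{x_i}\phi^\vee}$ (up to the appropriate pairing), so that $\|\{f(x_i)\}\|_{\ell^2}$ is exactly the analysis-coefficient sequence for the proposed frame vectors $\ell_{x_i}\phi^\vee$. Establishing the norm equivalence $\|f\|_{L^2}\asymp\|\{f(x_i)\}\|_{\ell^2}$ is therefore equivalent to the frame claim, so I would focus entirely on that equivalence.

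The key step is to turn the oscillation bound into a frame inequality. Writing $\sum_i f(x_i)1_{U_i}$ as the ``staircase'' approximation to $f$, I would use
\begin{equation*}
  \left|\,\|f\|_{L^2}-\Big\|\sum_i f(x_i)1_{U_i}\Big\|_{L^2}\,\right|
  \leq \left\|f-\sum_i f(x_i)1_{U_i}\right\|_{L^2}
  \leq C_\epsilon\big(\|X_1 f\|_{L^2}+\|X_2 f\|_{L^2}+\|X_2X_1 f\|_{L^2}\big).
\end{equation*}
Because the $U_i$ are disjoint with $G=\cup_i U_i$ and $|U_i|\leq |U_\epsilon|$, the middle term satisfies $\|\sum_i f(x_i)1_{U_i}\|_{L^2}^2=\sum_i |f(x_i)|^2|U_i|$, which (after absorbing the measures $|U_i|$ into the sampling weights, exactly as the factors $\tfrac{x_{k+1}-x_{k-1}}{2}$ appear in Theorem~\ref{thm:10}) is comparable to $\|\{f(x_i)\}\|_{\ell^2}^2$. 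Thus the displayed inequality reads, schematically, $\big|\,\|f\|_{L^2}-\|\{f(x_i)\}\|_{\ell^2}\big|\leq C_\epsilon\sum_{|\alpha|\leq 2}\|R^\alpha f\|_{L^2}$.

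The Bernstein-type input is supplied by the hypothesis that right differentiation up to order $2$ is continuous on $H_\phi$: this means there is a constant $D$ with $\|R^\alpha f\|_{L^2}\leq D\|f\|_{L^2}$ for all $f\in H_\phi$ and $|\alpha|\leq 2$. Combining this with the previous estimate gives
\begin{equation*}
  \big|\,\|f\|_{L^2}-\|\{f(x_i)\}\|_{\ell^2}\,\big|\leq C_\epsilon\, D'\,\|f\|_{L^2},
\end{equation*}
and since $C_\epsilon\to 0$ as $\epsilon\to 0$, I would fix $\epsilon$ small enough that $C_\epsilon D'<1$. This yields the two-sided bound $(1-C_\epsilon D')\|f\|_{L^2}\leq\|\{f(x_i)\}\|_{\ell^2}\leq (1+C_\epsilon D')\|f\|_{L^2}$, which is precisely the claimed norm equivalence and hence the frame inequality \eqref{eq:4}.

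The main obstacle I anticipate is verifying that the staircase term genuinely reproduces the $\ell^2$ sampling norm with the correct weights and that the differential operators $R^\alpha$ map $H_\phi$ into $L^2(G)$ so that Lemma~\ref{lem:2} applies — that is, checking that the continuity-of-differentiation hypothesis really delivers a uniform Bernstein constant $D$ independent of $f$. One must also ensure that the covering $\{x_iU_\epsilon\}$ can be chosen so that the sampling points are separated enough for $\{f(x_i)\}$ to land in $\ell^2$ while dense enough for $G=\cup_i U_i$; this is a routine covering-lemma argument for Lie groups, but it is where the geometry of $G$ enters. Once the frame bounds are in place, reconstruction of $f$ from $\{f(x_i)\}$ follows by inverting the associated frame operator, completing the identification of $\{\ell_{x_i}\phi^\vee\}$ as a frame for $H_\phi$.
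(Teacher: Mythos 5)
Your proposal is correct and follows essentially the same route as the paper: combine the oscillation estimate of Lemma~\ref{lem:2} with the continuity of right differentiation on $H_\phi$ (your Bernstein constant $D$) to get $\left\|f-\sum_i f(x_i)1_{U_i}\right\|_{L^2}\leq C_\epsilon D'\|f\|_{L^2}$ with $C_\epsilon D'<1$, and then run a perturbation argument around the staircase approximation. The one point you defer to a covering argument --- that $\sum_i|f(x_i)|^2|U_i|$ is comparable to the \emph{unweighted} $\ell^2$ norm, which requires the measures $|U_i|$ to be bounded below as well as above --- is precisely what the paper supplies by choosing the $x_i$ (via Pesenson's covering lemma) so that the disjoint sets $x_iU_{\epsilon/4}$ sit inside the $U_i$, together with the Feichtinger--Gr\"ochenig equivalence of the norms $\|\sum_i\lambda_i 1_{x_iU_{\epsilon/4}}\|_{L^2}$ and $\|\{\lambda_i\}\|_{\ell^2}$.
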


\begin{proof}
  First we note that if right differentiation is continuous
  then by Lemma~\ref{lem:2} there is a constant $C_\epsilon$
  such that
  \begin{equation*}
    \left\| f - \sum_i f(x_i)1_{U_i} \right\|_{L^{2}}
    \leq C_\epsilon  \| f\|_{L^{2}} 
  \end{equation*}
  By \cite{Pesenson2000} it is possible to choose
  $\epsilon$ and $x_i$ such that the sets
  $x_iU_{\epsilon/4}$ are disjoint and $C_\epsilon < 1$.
  Further note that as shown in \cite{Feichtinger1989a} both 
  $\| \sum_i \lambda_i 1_{x_iU_{\epsilon/4}}\|_{L^2}$ and
  $\| \sum_i \lambda_i 1_{x_iU_{\epsilon}}\|_{L^2}$ 
  define equivalent norms on $\ell^2$.
  Since
  $1_{x_iU_{\epsilon/4}} \leq 1_{U_i}$ we get
  \begin{align*}
    \| \{ f(x_i)\}\|_{\ell^2}
    &\leq C \| \sum_i f(x_i) 1_{x_iU_{\epsilon/4}}\|_{L^2}  \\
    &\leq C \| \sum_i (f-f(x_i)) 1_{x_iU_{\epsilon/4}}\|_{L^2} 
    + C \| \sum_i f 1_{x_iU_{\epsilon/4}}\|_{L^2}  \\
    &\leq C \| f -  \sum_i f(x_i) 1_{U_i} \|_{L^2} 
    + C \| \sum_i f 1_{U_i} \|_{L^2} \\
    &\leq C (1+C_\epsilon) \| f  \|_{L^2}
  \end{align*}
  Similarly
  we have
  \begin{align*}
    \| f \|_{L^2}
    &\leq  \| \sum_i (f-f(x_i)) 1_{U_i} \|_{L^2} 
    +  \| \sum_i f(x_i)  1_{U_i} \|_{L^2} \\
    &\leq C_\epsilon \| f\|_{L^2} + 
    \| \sum_i f(x_i) 1_{x_iU_\epsilon} \|_{L^2} \\
    &\leq C_\epsilon \| f\|_{L^2} + 
    D \| \{ f(x_i) \} \|_{\ell^2} \\
  \end{align*}
  Since $C_\epsilon < 1$ the inequalities above can be combined into
  \begin{equation*}
    \frac{1-C_\epsilon}{D} \| f\|_{L^2}
    \leq \| \{ f(x_i)\} \|_{\ell^2}
    \leq C (1+C_\epsilon) \| f \|_{L^2}
  \end{equation*}
  which shows the equivalence of the norms.

  Since $$f(x_i) = \int f(y)\phi^\vee (x_i^{-1}y)\,dy$$
  the vectors $\ell_{x_i}\phi^\vee$ form a frame.
\end{proof}

This leaves of course task of showing the continuity
of left differentiation on $H_\phi$. In 
the following example most statements are already proven
in \cite{Holschneider1995}.

\subsubsection{Example: Wavelet transform}

Let $\pi$ be the irreducible unitary representation of
$G$ on the space 
\begin{equation*}
  L^2_+ = \{ f\in L^2(\mathbb{R})  
  \mid \mathrm{supp}(\widehat{f}) \subseteq [0,\infty) \}
\end{equation*}
given by
\begin{equation*}
  \pi(a,b) f(x) = \frac{1}{\sqrt{a}} f\left( \frac{x-b}{a} \right)
\end{equation*}
Define the subspace
\begin{equation*}
  \mathcal{S}_+ = \{ f\in \mathcal{S}(\mathbb{R})  
  \mid \mathrm{supp}(\widehat{f}) \subseteq [0,\infty) \}
  \subseteq L^2_+
\end{equation*}
then $\mathcal{S}_+$ is invariant under the differential
operators
\begin{equation*}
  \pi(X)u = \lim_{t\to 0} \frac{\pi(e^{tX})u-u}{t}
\end{equation*}
for $X\in \mathfrak{g}$.
Denote by $\pi(R^\alpha)$ the differential operator
\begin{equation*}
  \pi(R^\alpha) u =  \pi(X_{\alpha(k)})\pi(X_{\alpha(k-1)})\cdots \pi(X_{\alpha(1)}) u
\end{equation*}
which also leaves $\mathcal{S}_+$ invariant.
For a non-zero $u\in \mathcal{S}_+$ define the wavelet transform
of $f\in L^2_+$ by
\begin{equation*}
  W_u(f)(a,b) = \ip{f}{\pi(a,b)u} = \int f(x) \frac{1}{\sqrt{a}}
  \overline{u\left( \frac{x-b}{a} \right)}\, dx
\end{equation*}
We can normalize $u$ according to the Duflo-Moore theorem \cite{Duflo1976}
such that for all $f\in L^2_+$
\begin{equation*}
  W_u(f)*W_u(u) = W_u(f) 
\end{equation*}
Then the wavelet tranform $W_u$ becomes an isometric 
isomorphism from $L^2_+$
into the reproducing kernel Hilbert space $L^2(G)*W_u(u)$.
Also the functions $W_u(f)$ is smooth and for $X\in \mathfrak{g}$ we have
\begin{equation*}
  R^\alpha W_u(f) = W_{\pi(R^\alpha)u}(f)
\end{equation*}
The Duflo-Moore theorem again tells us that
\begin{equation*}
  W_{\pi(R^\alpha)u}(f) = W_u(f)*W_{\pi(R^\alpha)u}(u)
\end{equation*}
Since $\pi(R^\alpha)u \in \mathcal{S}_+$ we have
$W_{\pi(R^\alpha)u}(u)\in L^1(G)$
as shown in \cite{Holschneider1995}
and therefore
$$\| W_{\pi(R^\alpha)u}(f)\|_{L^2} \leq C_\alpha \| W_u(f)\|_{L^2}.$$
This shows that the continuities in Theorem~\ref{thm:1}
are satisfied and we can
reconstruct $W_u(f)$ from it samples. In other words we
can reconstruct $f\in L^2_+$ from
sampling its wavelet coefficients.

\section{Sampling in reproducing kernel Banach spaces on Lie groups}
\label{sec:srkbs}
A Banach space of functions is called a reproducing kernel
Banach space if point evaluation is continuous. 
We restrict our attention to reproducing kernel Banach 
spaces where the reproducing formula is given by 
a Lie group convolution.
We derive local oscillation estimates for
such spaces and use them to give a discrete
characterization of the reproducing kernel space. 
In particular we obtain frame and atomic decompositions
for reproducing kernel Banach spaces under certain smoothness conditions
on the kernel.

\subsection{Reproducing kernel Banach spaces}
Let $G$ be a locally compact group 
with left invariant Haar measure $\mu$. 
Denote by $\ell_x$ and $r_x$ the left and right
translations given by
\begin{equation*}
  \ell_x f(y) = f(x^{-1}y)
  \qquad\text{and}\qquad
  r_xf(y) = f(yx)
\end{equation*}
A Banach space of functions is called left or right invariant
if there is a constant $C_x$ such that
\begin{equation*}
  \| \ell_x f\|_B \leq C_x \| f\|_B
  \qquad\text{or}\qquad\| r_x f\|_B \leq C_x \| f\|_B
\end{equation*}
respectively.
We will always assume that for compact $U$ there is a 
constant $C_U$ such that for all $f\in B$
\begin{equation}
  \label{eq:3}
  \sup_{y\in U} \| \ell_yf \|_B \leq C_U \| f\|_B
  \qquad\text{and}\qquad
  \sup_{y\in U} \| r_yf \|_B \leq C_U \| f\|_B
\end{equation}
For $1\leq p <\infty$ the space $L^p(G)$ denotes the equivalence 
class of measurable functions for which
\begin{equation*}
  \| f\|_{L^p} = \left( \int |f(x)|^p \,d\mu(x)\right)^{1/p} < \infty
\end{equation*}
We will often
write $dx$ instead of $d\mu(x)$.
The space $L^\infty(G)$ consists of equivalence classes of
measurable functions for which
\begin{equation*}
  \| f \|_{L^\infty} = \mathrm{ess\,sup}_{x\in G} |f(x)|
\end{equation*}
The spaces $L^p(G)$ are left and right invariant and satisfy \eqref{eq:3}
for $1\leq p\leq \infty$, however
the left and right translations are only continuous for
$1\leq p <\infty$.

When $f,g$ are measurable functions on $G$ for which 
the product $f(x)g(x^{-1}y)$ is integrable for all $y\in G$
we define the convolution $f*g$ as
\begin{equation*}
  f*g(y) = \int_G f(x)g(x^{-1}y)\,d\mu(x)
\end{equation*}
A Banach space of functions $B$ is called solid if 
$|f| \leq |g|$ and $g\in B$ imply that $f \in B$.
All spaces $L^p(G)$ are solid, but Sobolev subspaces are not.

In this article we will only work with reproducing kernel 
Banach spaces which can be constructed in the following
manner.
Let $B$ be a solid left invariant
Banach space of functions on $G$ which satisfies \eqref{eq:3}.
Assume that there
is a non-zero $\phi\in B$ such that 
\begin{equation*}
  \left| \int_G f(y)\phi(y^{-1})\,dy\right|
  \leq C \| f\|_B
\end{equation*}
By the left invariance of $B$ the convolution 
$f*\phi$ is well-defined by
\begin{equation*}
  f*\phi(x) = \int_G f(y)\phi(y^{-1}x)\,dy
\end{equation*}
Assume that $\phi$ satisfies the reproducing formula
\begin{equation*}
  \phi*\phi = \phi
\end{equation*}
and that convergence in $B$ implies convergence (locally) in measure.
Then the space
\begin{equation*}
  B_\phi = \{ f\in B\mid f=f*\phi   \}
\end{equation*}
is a non-zero reproducing kernel Banach subspace of $B$. 
Let us for completeness prove this statement 
(though it is already contained in \cite{Christensen2010} in disguise) 
by showing that $B_\phi$ is a closed subspace of $B$. 
Let $f_n \in B_\phi$ be a sequence 
converging to $f\in B$ then $f_n$ converges locally in measure.
Therefore there is a subsequence $f_{n_k}$ converging to
$f$ almost everywhere. Thus for almost all $x\in G$ we have
\begin{align*}
  |f(x)  - f*\phi(x)|
  &\leq
  |f(x)  - f_{n_k}(x)| 
  + |f_{n_k}(x) - f_{n_k}*\phi(x)|
  + | f_{n_k}*\phi(x) -  f*\phi(x)| \\
  &\leq
    |f(x)  - f_{n_k}(x)| 
  + |f_{n_k}(x) - f_{n_k}*\phi(x)|
  + C \| f_{n_k} -  f\|_{B}
\end{align*}
The middle term is $0$ and the two remaining terms can be
made arbitrarily small so $f=f*\phi$ which shows that
$B_\phi$ is closed in $B$. 
Point evaluation is continuous for $f\in B_\phi$ 
by the left invariance of $B$:
\begin{equation*}
  |f(x)| 
  \leq C \|\ell_{x^{-1}}f \|_B
  \leq C_x \| f\|_B
\end{equation*}

The discretizations we will investigate on $B_\phi$ can be
thought of as Riemann sums for the reproducing formula 
\begin{equation*}
  f(x) = \int_G f(y)\phi(y^{-1}x)\,dy
\end{equation*}
which holds for all $f\in B_\phi$.

\subsection{Atomic decompositions and Banach frames}
We will derive atomic decompositions and frames for 
reproducing kernel Banach spaces, and here we introduce the two notions.
Further we introduce sequence spaces and partitions of unity 
used to obtain the discrete characterizations.

\begin{definition}
  \label{def:1}
  Let $B$ be a Banach space and $B^\#$ an associated Banach sequence 
  space with index set $I$. If for $\lambda_i\in B^*$ and $\phi_i \in B$ 
  we have
  \begin{enumerate}
  \item $\{\lambda_i(f) \}_{i\in I} \in B^\#$ for all $f\in B$
  \item the norms 
    $\| \lambda_i(f)\|_{B^\#}$ and $\| f\|_B$ are equivalent
  \item $f$ can be written $f = \sum_i \lambda_i(f) \phi_i$
  \end{enumerate}
  then $\{(\lambda_i,\phi_i) \}$ is an atomic decomposition
  of $B$ with respect to $B^\#$.
\end{definition}

More generally a Banach frame for a Banach space can be defined as
\begin{definition}
  \label{def:3}
  Let $B$ be a Banach space and $B^\#$ an associated Banach sequence 
  space with index set $I$. If for $\lambda_i\in B^*$
  we have
  \begin{enumerate}
  \item $\{\lambda_i(f) \}_{i\in I} \in B^\#$ for all $f\in B$
  \item the norms 
    $\| \lambda_i(f)\|_{B^\#}$ and $\| f\|_B$ are equivalent
  \item there is a bounded reconstruction operator $S:B^\# \to B$ 
    such that $S(\{\lambda_i(f) \}) = f$
  \end{enumerate}
  then $\{ \lambda_i \}$ is an Banach frame for 
  $B$ with respect to $B^\#$.
\end{definition}

In Hilbert spaces the existence of the operator $S$ is automatic given
the equivalence of the norms $\| \lambda_i(f)\|_{B^\#}$ and $\| f\|_B$.
Further, the operator $S$ has been shown to exist 
for $p$-frames for reproducing kernel Banach spaces in \cite{Han2009}. 
For general Banach spaces this is not the case as is demonstrated in
\cite{Casazza2005}.

We will work with very particular
Banach sequence spaces which are constructed from
a solid Banach function space $B$. These spaces were introduced in
\cite{Feichtinger1989a}.
For a compact neighbourhood $U$ of the identity we call
the sequence $\{ x_i\}_{i\in I}$ $U$-relatively separated
if $G\subseteq \cup_i x_iU$ and there is an $N$ such that
\begin{equation*}
  \sup_i (\# \{ j \mid x_iU\cap x_jU \neq \emptyset \}) \leq N
\end{equation*}
For a $U$-relatively separated sequence $X=\{ x_i\}_{i\in I}$ 
define the space $B^\#(X)$ to be the collection
of sequences $\{ \lambda_i\}_{i\in I}$ for which
\begin{equation*}
  \sum_{i\in I} |\lambda_i| 1_{x_iU} \in B
\end{equation*}
when the sum is taken to be pointwise.
If the compactly supported continuous functions are dense in $B$ then
this sum also converges in norm.
Equipped with the norm 
$$\| \{\lambda_i \}\|_{B^\#} = \|
\sum_{i\in I} |\lambda_i| 1_{x_iU} \|_B$$
this is a solid Banach sequence space.
In the case were $B=L^p(G)$ we get that $B^\#(X)=\ell^p(I)$.
For fixed $X=\{ x_i\}_{i\in I}$ the space $B^\#(X)$ only depends
on the compact neighbourhood $U$ up to norm equivalence.
Further, if $X=\{ x_i \}_{i\in I}$ and $Y=\{ y_i \}_{i\in I}$ are 
two $U$-relatively separated sequences 
with same index set such that $x_i^{-1}y_i \in V$
for some compact set $V$, then $B^\#(X)=B^\#(Y)$ equivalent norms.
For these properties consult Lemma 3.5 in \cite{Feichtinger1989a}.

Given a compact neighbourhood $U$ of the identity
the non-negative functions $\psi_i$ are called a bounded uniform 
partition of unity subordinate to $U$ (or $U$-BUPU), if
there is a $U$-relatively separated 
sequence $\{ x_i\}$, such that 
$\mathrm{supp}(\psi_i)\subseteq x_i U$ and
$\sum_i \psi_i =1$. A partition of unity could consist of indicator
functions, however on a Lie group $G$ it is possible to find smooth
$U$-BUPU's whenever $U$ is contained in a ball of radius less 
than the injectivity radius of $G$ 
(see for example \cite[Lemma 2.1]{Pesenson2000}). 
For the existence of $U$-BUPU's with elements contained in 
homogeneous Banach spaces see the paper \cite{Feichtinger1981}.

\subsection{Local oscillation estimates on Lie groups}

Let $G$ be Lie group with Lie algebra $\mathfrak{g}$
with exponential function $\exp:\mathfrak{g}\to G$. Then for
$X\in\mathfrak{g}$ we define the right and left
differential operators (if the limits exist)
\begin{equation*}
  R(X)f(x) = \lim_{t\to 0} \frac{f(x\exp(tX))-f(x)}{t}
  \qquad\text{and}\qquad
  L(X)f(x) = \lim_{t\to 0} \frac{f(\exp(tX)x)-f(x)}{t}
\end{equation*}
Fix a basis $\{ X_i\}_{i=1}^{\mathrm{dim}(G)}$ 
for $\mathfrak{g}$. For a multi index $\alpha$ of
length $|\alpha| =k$ 
with entries between $1$ and $\mathrm{dim}(G)$ 
we introduce the operator $R^\alpha$ of subsequent right differentiations
\begin{equation*}
  R^{\alpha} f  
  = R(X_{\alpha(k)})  R(X_{\alpha(k-1)}) \cdots R(X_{\alpha(1)}) f
\end{equation*}
Similarly we introduce the operator $L^\alpha$ of subsequent 
left differentiations
\begin{equation*}
  L^{\alpha} f  
  = L(X_{\alpha(k)})  L(X_{\alpha(k-1)}) \cdots L(X_{\alpha(1)}) f
\end{equation*}
We call $f$ right (or left) differentiable of order $n$ 
if for every $x$ and all $|\alpha|\leq n$ the derivatives
$R^\alpha f(x)$ (or $L^\alpha f(x)$) exist.

In the following we will often use this lemma
\begin{lemma}\label{lem:1}
  Let $U$ be a compact set and fix a basis element $X_k\in\mathfrak{g}$.
  There is a constant $C_U$ such that for any $y\in U$
  and $|s|\leq \epsilon$ 
  \begin{enumerate}
  \item 
    If $f$ is right differentiable of order $1$, then 
      $$ |f(xe^{sX_k}y) - f(xy)|
    \leq C_U \int_{-\epsilon}^{\epsilon}
    \sum_{n=1}^{\mathrm{dim}(G)} | R(X_n) f(xe^{tX_k}y)|\, dt$$
  \item 
    If $f$ is left differentiable of order $1$, then
      $$|f(ye^{sX_k}x) - f(yx)|
    \leq C_U \int_{-\epsilon}^{\epsilon}
    \sum_{n=1}^{\mathrm{dim}(G)} | L(X_n) f(ye^{tX_k}x)|\, dt$$
  \end{enumerate}
  The constant $C_U$ depends only on $U$ and
  $C_{U'} \leq C_U$ for $U' \subseteq U$.
\end{lemma}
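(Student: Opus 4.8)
The plan is to reduce both estimates to the fundamental theorem of calculus applied along the one-parameter curve $t\mapsto e^{tX_k}$, routing the noncommutativity of $G$ through the adjoint representation $\mathrm{Ad}$ so that the relevant derivative becomes a \emph{bounded} linear combination of the basis derivatives $R(X_n)$ (resp.\ $L(X_n)$).

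For part (a), the first step is to slide the perturbation $e^{tX_k}$ past the fixed element $y$ by conjugation. Since $y^{-1}e^{tX_k}y=\exp(t\,\mathrm{Ad}(y^{-1})X_k)$, we may write
\[
  xe^{tX_k}y=(xy)\exp\bigl(t\,\mathrm{Ad}(y^{-1})X_k\bigr).
\]
Setting $Y_y:=\mathrm{Ad}(y^{-1})X_k$, the curve $t\mapsto f(xe^{tX_k}y)=f\bigl((xy)e^{tY_y}\bigr)$ is differentiable with derivative $R(Y_y)f(xe^{tX_k}y)$, directly from the definition of $R$ and the additivity $e^{tY_y}e^{hY_y}=e^{(t+h)Y_y}$. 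Expanding $Y_y=\sum_n c_n(y)X_n$ in the fixed basis and using linearity of $X\mapsto R(X)$ gives $\frac{d}{dt}f(xe^{tX_k}y)=\sum_n c_n(y)R(X_n)f(xe^{tX_k}y)$, a finite combination that exists because $f$ is right differentiable of order $1$. Integrating over the interval between $0$ and $s$ (which lies in $[-\epsilon,\epsilon]$ since $|s|\le\epsilon$) and estimating under the integral then yields
\[
  |f(xe^{sX_k}y)-f(xy)|\le\Bigl(\max_n|c_n(y)|\Bigr)\int_{-\epsilon}^{\epsilon}\sum_n|R(X_n)f(xe^{tX_k}y)|\,dt.
\]

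Next I would set $C_U:=\sup_{y\in U}\max_n|c_n(y)|$ and verify finiteness: the coefficients $c_n(y)$ are the coordinates of $\mathrm{Ad}(y^{-1})X_k$, i.e.\ matrix entries of $\mathrm{Ad}(y^{-1})$ in the chosen basis, hence continuous in $y$; so $\max_n|c_n(y)|$ is continuous and bounded on the compact set $U$. The monotonicity $C_{U'}\le C_U$ for $U'\subseteq U$ is then immediate, being the supremum of the same nonnegative function over a smaller set. Part (b) is entirely symmetric: the identity $ye^{tX_k}x=\exp(t\,\mathrm{Ad}(y)X_k)(yx)$ lets me differentiate using $L$ in place of $R$, with the coefficients now read off from $\mathrm{Ad}(y)$ rather than $\mathrm{Ad}(y^{-1})$; taking the larger of the two suprema produces a single constant $C_U$ valid for both parts.

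The only genuine subtlety is this first conjugation step---correctly transporting $e^{tX_k}$ through $y$ via $\mathrm{Ad}$ so that the derivative is a bounded linear combination of the $R(X_n)$---after which everything reduces to the fundamental theorem of calculus and a continuity/compactness argument. As in the proof of Lemma~\ref{lem:2}, I would invoke the fundamental theorem of calculus directly for the pointwise-differentiable curve, using the differentiability hypothesis exactly as there rather than dwelling on absolute-continuity technicalities.
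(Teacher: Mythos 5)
Your proposal is correct and follows essentially the same route as the paper's own proof: conjugate $e^{tX_k}$ past $y$ via $\mathrm{Ad}(y^{\mp 1})$, apply the fundamental theorem of calculus along the resulting one-parameter curve, expand $\mathrm{Ad}(y^{\mp1})X_k$ in the fixed basis, and bound the continuous coefficients uniformly on the compact set $U$ to get $C_U$ and its monotonicity. The only cosmetic difference is that you are slightly more explicit about why the supremum defining $C_U$ is finite and about taking the larger of the two constants for parts (a) and (b).
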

\begin{proof}
  First use the fundamental theorem of calculus to get
  \begin{align*}
    |f(xe^{sX_k}y) - f(xy)|
    &= |f(xye^{sAd_{y^{-1}}(X_k)}) - f(xy)| \\
    &= \left| 
      \int_0^s \frac{d}{dt} f(xye^{tAd_{y^{-1}}(X_k)} \,dt
    \right| \\
    &\leq \int_0^s \left| \frac{d}{dt} f(xye^{tAd_{y^{-1}}(X_k)}\right| \,dt \\
    &
    \leq \int_{-\epsilon}^{\epsilon}
    |R(Ad_{y^{-1}}(X_k)) f(xe^{tX_k}y)|\, dt \\
  \end{align*}
  The adjoint representation can be written as
  \begin{equation*}
    Ad_{y^{-1}}(X_k)) = c_1(y)X_1 + \dots + c_n(y)X_n
  \end{equation*}
  where the coefficients $c_i$ depend continuously on $y$
  (and also depend on $X_k$).
  So for smooth $f$ we have the pointwise inequality
  \begin{equation*}
    |R(Ad_{y^{-1}}(X_k))f| 
    \leq |c_1(y)||R(X_1)f| + \dots + |c_n(y)||R(X_n)f|
    \leq C_U(X_k) \sum_{n=1}^{\mathrm{dim}(G)} |R(X_n)f|
  \end{equation*}
  where $C_U(X_k)$ is $\max_{y\in U} |c_i(y)|$.
  Let $C_U = \max_{k} |C_U(X_k)|$, then we obtain
  \begin{align*}
    |f(xe^{sX_k}y) - f(xy)|
    &\leq C_U \int_{-\epsilon}^{\epsilon}
    \sum_{n=1}^{\mathrm{dim}(G)} | R(X_n) f(xye^{tAd_{y^{-1}}(X_k)})|\, dt \\
    &= C_U \int_{-\epsilon}^{\epsilon}
    \sum_{n=1}^{\mathrm{dim}(G)} | R(X_n) f(xe^{tX_k}y)|\, dt
  \end{align*}
  From the definition of $C_U$ above it follows that
  $C_{U'} \leq C_U$ for $U' \subseteq U$.
  A similar argument works for left differentiations.
\end{proof}

From now on we let $U_\epsilon$ denote the set
\begin{equation*}
  U_\epsilon = \left\{ \prod_{k=1}^n \exp(t_kX_{k}) 
    \Mid -\epsilon \leq t_k \leq \epsilon \right\}.
\end{equation*}
We further define the right local oscillations
\begin{equation*}
  M_r^\epsilon f(x) = \sup_{u\in U_\epsilon} |f(xu^{-1})-f(x)|
\end{equation*}
and the left local oscillations
\begin{equation*}
  M_l^\epsilon f(x) = \sup_{u\in U_\epsilon} |f(ux)-f(x)|
\end{equation*}

For formulating the next lemma we need some notation. By $\delta$ we 
denote an $n$-tuple $\delta = (\delta_1,\dots,\delta_n)$ with
$\delta_i \in \{0,1 \}$. The length $|\delta|$ of $\delta$ is the number
of non-zero entries $| \delta | = \delta_1+\dots +\delta_n$.
Further the function $\tau_\delta$ is defined as
\begin{equation*}
  \tau_\delta (t_1,\dots,t_n)
  = e^{\delta_1t_1X_1} \dots e^{\delta_nt_nX_n}
\end{equation*}

\begin{lemma}\label{lem:5}
  If $f$ is right differentiable of order $n=\mathrm{dim}(G)$
  there is a constant $C_\epsilon $ such that
  \begin{equation*}
    M_r^\epsilon f(x) \leq C_\epsilon 
    \sum_{1\leq |\alpha|\leq n} 
    \sum_{|\delta|=|\alpha|}
    \underbrace{\int_{-\epsilon}^\epsilon  
      \cdots \int_{-\epsilon}^\epsilon}_{\text{$|\delta|$ integrals}}
    |R^\alpha f(x\tau_\delta(t_1,\dots,t_n)^{-1})|
    (dt_1)^{\delta_1} \cdots (dt_n)^{\delta_n}
  \end{equation*}
  If $f$ is left differentiable of order $n=\mathrm{dim}(G)$
  there is a constant $C_\epsilon $ such that
  \begin{equation*}
    M_l^\epsilon f(x) \leq C_\epsilon 
    \sum_{1\leq |\alpha|\leq n} 
    \sum_{|\delta|=|\alpha|}
    \underbrace{\int_{-\epsilon}^\epsilon  
      \cdots \int_{-\epsilon}^\epsilon}_{\text{$|\delta|$ integrals}}
    |L^\alpha f(\tau_\delta(t_1,\dots,t_n)x)|
    (dt_1)^{\delta_1} \cdots (dt_n)^{\delta_n}
  \end{equation*}  
  For $\epsilon' \leq \epsilon$ we have $C_{\epsilon'} \leq C_\epsilon$.
\end{lemma}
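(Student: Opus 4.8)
The plan is to reduce the statement to a uniform bound on a single difference and then run an induction on the number of exponential factors that successively removes the dependence on the base point. I will treat the right-handed case; the left-handed one follows by the symmetric argument, using Lemma~\ref{lem:1}(b), left translations, and the operators $L^\alpha$ in place of $R^\alpha$. First I would write a generic $u\in U_\epsilon$ as $u=e^{s_1X_1}\cdots e^{s_nX_n}$ with each $|s_k|\le\epsilon$, so that $xu^{-1}=x\,e^{-s_nX_n}\cdots e^{-s_1X_1}$, and observe that it suffices to bound $|f(xu^{-1})-f(x)|$ by the asserted right-hand side uniformly in $s_1,\dots,s_n$: because that bound no longer depends on the $s_k$, it passes to the supremum defining $M_r^\epsilon f(x)$.

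Writing $P_k=P_k(s_1,\dots,s_k)=e^{-s_kX_k}\cdots e^{-s_1X_1}$ with $P_0=e$, I would prove the following auxiliary claim $\mathcal{A}(k)$ by induction on $k$: for every $g$ right differentiable of order $k$, every base point $y$, and all $s_1,\dots,s_k\in[-\epsilon,\epsilon]$,
\[
|g(yP_k)-g(y)|\le C_{k,\epsilon}\sum_{\emptyset\neq S\subseteq\{1,\dots,k\}}\ \sum_{|\alpha|=|S|}\ \int\cdots\int\Bigl|R^\alpha g\bigl(y\,e^{t_{j_1}X_{j_1}}\cdots e^{t_{j_r}X_{j_r}}\bigr)\Bigr|\,dt_{j_1}\cdots dt_{j_r},
\]
where $S=\{j_1>\dots>j_r\}$ and each $t_j$ ranges over $[-\epsilon,\epsilon]$. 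Taking $k=n$, $y=x$, $g=f$, relabelling $S$ by the tuple $\delta\in\{0,1\}^n$ equal to its indicator, so that $e^{-t_{j_1}X_{j_1}}\cdots e^{-t_{j_r}X_{j_r}}=\tau_\delta(t_1,\dots,t_n)^{-1}$, and substituting $t_j\mapsto -t_j$ in each integral, then yields exactly the inequality in the lemma with $C_\epsilon=C_{n,\epsilon}$.

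The inductive step is where the real work lies, and it mirrors the computation already carried out for the $(ax+b)$-group in Lemma~\ref{lem:2}. For the passage from $k-1$ to $k$ I would split
\[
g(yP_k)-g(y)=\bigl[g(y\,e^{-s_kX_k}P_{k-1})-g(yP_{k-1})\bigr]+\bigl[g(yP_{k-1})-g(y)\bigr].
\]
The second bracket is handled directly by $\mathcal{A}(k-1)$ applied to $g$ with base point $y$, producing only terms indexed by subsets $S\subseteq\{1,\dots,k-1\}$. For the first bracket I would apply Lemma~\ref{lem:1}(a) with the fixed right tail $P_{k-1}$ playing the role of ``$y$'', bounding it by $C_U\int_{-\epsilon}^{\epsilon}\sum_m|R(X_m)g(y\,e^{t_kX_k}P_{k-1})|\,dt_k$; then for each $m$ I would insert $\pm R(X_m)g(y\,e^{t_kX_k})$, so that $|R(X_m)g(y\,e^{t_kX_k}P_{k-1})|\le|R(X_m)g(y\,e^{t_kX_k})|+|R(X_m)g(y\,e^{t_kX_k}P_{k-1})-R(X_m)g(y\,e^{t_kX_k})|$. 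The first summand integrates to the clean term with $S=\{k\}$ and $\alpha=(m)$, while the second is again a difference against $P_{k-1}$, now for the function $R(X_m)g$ (right differentiable of order $k-1$) with base point $y\,e^{t_kX_k}$, to which $\mathcal{A}(k-1)$ applies and which appends the index $k$ to each subset it produces. Since every index peeled off here is strictly smaller than $k$, the accumulated exponential products stay in strictly decreasing order and the derivative length grows by exactly one, so all contributions land in the required form.

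The main obstacle I anticipate is combinatorial bookkeeping rather than analysis: one must check that the three families generated in the inductive step (those with $k\notin S$, the clean term $S=\{k\}$, and those with $k\in S$ and $|S|\ge2$) together range over precisely the nonempty subsets of $\{1,\dots,k\}$ with the matching derivative order $|\alpha|=|S|$, and that the accreting left factors reproduce exactly $y\tau_\delta(t)^{-1}$ after the sign change $t_j\mapsto-t_j$. The constant is controlled by choosing $U$ in Lemma~\ref{lem:1} to be a fixed compact set containing all tails $P_{k-1}$ for $|s_j|\le\epsilon$; since these sets shrink as $\epsilon$ decreases and $C_{U'}\le C_U$ for $U'\subseteq U$, while the finite number of terms does not depend on $\epsilon$, one obtains a constant with $C_{\epsilon'}\le C_\epsilon$ for $\epsilon'\le\epsilon$, as claimed.
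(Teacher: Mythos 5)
Your proof is correct and follows essentially the same route as the paper's: both arguments peel off the exponential factors of the group element one at a time, use Lemma~\ref{lem:1} to trade each difference across a fixed compact right tail for an integral of one more right derivative, and track the result with the same subset/multi-index bookkeeping ($|\alpha|=|\delta|$) and the same monotone constants. The only difference is organizational — you run a single clean induction on the number of factors, whereas the paper first telescopes over the $\sigma_l$ and then iteratively removes the leftover $\sigma_k$'s via the inequality for $T_{\alpha,\delta}f(x\sigma_k)$ — but the recursion trees and the resulting estimates are the same.
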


\begin{proof}
  For any $x$ there is an
  element $\sigma =e^{s_nX_n}\dots e^{s_1X_1} \in U_\epsilon ^{-1}$
  such that 
  \begin{equation*}
    M_r^\epsilon f(x) 
    = |f(xe^{s_nX_n}\dots e^{s_1X_1})-f(x)|
  \end{equation*}
  Denote by $\sigma_k$ the element in $U_\epsilon^{-1}$ given by
  \begin{equation*}
    \sigma_k = e^{s_nX_n}e^{s_{n-1}X_{n-1}}\dots e^{s_{k}X_{k}}
  \end{equation*}
  with the convention that $\sigma_{n+1}=e$.
  The elements $\sigma_k$ depend on $x$, and we wish to estimate
  the function $M_r^\epsilon f(x) = 
  |f(x\sigma_1)-f(x)|$ by an expression without any $\sigma_k$. 
  To do so we make repeated use of the fundamental theorem of
  calculus in form of the previous lemma.
  
  For any $n$-tuple $\delta$ of $0$'s and $1$'s 
  and for a smooth function $f$ we define
  \begin{equation*}
    T_{\alpha,\delta} f(x)
    = \underbrace{\int_{-\epsilon}^\epsilon 
      \dots \int_{-\epsilon}^\epsilon}_{\text{$|\delta|$ integrals}}
    |R^{\alpha} f(x\tau_\delta(t_1,\dots,t_n)^{-1})|\, 
    (dt_1)^{\delta_1}    (dt_2)^{\delta_2}\dots (dt_n)^{\delta_n}
  \end{equation*}
  We first show that if $\delta_m=0$ for $m\geq k$, then
  \begin{equation*}
    T_{\alpha,\delta} f(x\sigma_k) 
    \leq
    T_{\alpha,\delta} f(x\sigma_{k+1}) + 
    C_{U_\epsilon^{-1}} 
    \sum_{|\alpha'|=|\delta'|=|\alpha|+1} T_{\alpha',\delta' }f(x\sigma_{k+1}) 
  \end{equation*}
  where $\delta'_m=0$ for $m\geq k+1$.
  To show this note that
  \begin{align*}
    |R^{\alpha} &f(x\sigma_k \tau_\delta(t_1,\dots,t_n)^{-1})| \\
    &\leq |R^{\alpha} f(x\sigma_{k+1}\tau_\delta(t_1,\dots,t_n)^{-1})|\\
    &\qquad+ 
    |R^{\alpha} f(x\sigma_{k+1}e^{s_kX_k}\tau_\delta(t_1,\dots,t_n)^{-1}) -
    R^{\alpha} f(x\sigma_{k+1}\tau_\alpha(t_1,\dots,t_n)^{-1})| \\
    &\leq |R^{\alpha} f(x\sigma_{k+1}\tau_\delta(t_1,\dots,t_n)^{-1})| \\
    &\qquad+    C_{U_\epsilon^{-1}} \int_{-\epsilon}^{\epsilon}
    \sum_{n=1}^{\mathrm{dim}(G)} 
    | R(X_n) R^{\alpha}f(x\sigma_{k+1}e^{t_kX_k}
    \tau_\delta(t_1,\dots,t_n)^{-1})|\, dt_k \\
  \end{align*}
  The terms 
  $R(X_n) R^{\alpha}f(x\sigma_{k+1}e^{t_kX_k}
  \tau_\delta(t_1,\dots,t_n)^{-1})$ 
  are of the type 
  \begin{equation*}
    R^{\alpha'} f(x \sigma_{k+1} \tau_{\delta'}(t_1,\dots,t_n)^{-1})
  \end{equation*}
  with $|\alpha'| = |\alpha|+1$ and $\delta'_m = 0$ for $m\geq k+1$.
  Therefore 
  \begin{align*}
    T_{\alpha,\delta} &f(x\sigma_k) \\
    &= \int_{-\epsilon}^\epsilon 
    \dots \int_{-\epsilon}^\epsilon
    |R^{\alpha} f(x\tau_\delta(t_1,\dots,t_n)^{-1})|\, 
    (dt_1)^{\delta_1}    (dt_2)^{\delta_2}\dots (dt_n)^{\delta_n}\\
    &\leq 
    \int_{-\epsilon}^\epsilon 
    \dots \int_{-\epsilon}^\epsilon
    |R^{\alpha} f(x\sigma_{k+1}\tau_\delta(t_1,\dots,t_n)^{-1})|\, 
    (dt_1)^{\delta_1}    (dt_2)^{\delta_2}\dots (dt_n)^{\delta_n}\\
    &\qquad + 
    C_{U_\epsilon^{-1}} \underbrace{\int_{-\epsilon}^\epsilon 
      \dots \int_{-\epsilon}^\epsilon}_{\text{$|\alpha|+1$ integrals}}
    \sum_{m=1}^n 
    | R(X_m) R^{\alpha}f(x\sigma_{k+1}e^{t_kX_k}
    \tau_\delta(t_1,\dots,t_n)^{-1})|\, dt_k
    \,(dt_1)^{\delta_1}    (dt_2)^{\delta_2}\dots (dt_n)^{\delta_n}\\
    &\leq T_{\alpha,\beta}f(x\sigma_{k+1})
    + C_{U_\epsilon^{-1}} \sum_{|\alpha'|=|\delta'|=|\alpha|+1}
    T_{\alpha',\delta'}f(x\sigma_{k+1})
  \end{align*}
  The assumption that $\delta_m=0$ for $m\geq k$ ensures that
  each $\tau_{\delta'}$ is in $U_\epsilon$ and thus the constant
  $C_{U_\epsilon^{-1}}$ shows up in the application of the previous 
  lemma. As $\epsilon$ is chosen smaller this constant is thus bounded.
    
  Estimating the right local oscillation we first obtain 
  \begin{align*}
    M_r^\epsilon f(x) 
    &= |f(x\sigma_1)-f(x)| \\
    &\leq \sum_{l=1}^{n} |f(x\sigma_{l})-f(x\sigma_{l+1})| \\
    &= \sum_{l=1}^{n} |f(x\sigma_{l+1}e^{s_lX_l})-f(x\sigma_{l+1})| \\
    &\leq \sum_{l=1}^{n} \int_{-\epsilon}^{\epsilon}
    | R(X_l) f(x\sigma_{l+1}e^{t_lX_l})|\, dt_l \\
  \end{align*}
  This is a finite sum of terms of the type
  $T_{\alpha,\delta} f(x\sigma_k)$
  with $|\alpha|=|\delta|=1$ and $2\leq k\leq n+1$.
  Each of the terms with $2\leq k\leq n$ can in turn 
  be estimated by a sum of terms $T_{\alpha,\delta} f(x\sigma_k)$
  with $1\leq |\alpha|=|\delta|\leq 2$ for $3\leq k\leq n+1$. 
  Repeating these steps we find
  \begin{equation*}
    M_r^\epsilon 
    f(x) \leq C_\epsilon 
    \sum_{1\leq |\alpha|=|\beta|\leq n} T_{\alpha,\beta}f(x)
  \end{equation*}
  where $C_\epsilon$ is a constant for which
  $C_{\epsilon'}\leq C_\epsilon$ when $\epsilon'\leq \epsilon$.
  
  The inequality for the left local oscillation is obtained
  analogously.
\end{proof}

The following local oscillation estimate will be of great
importance to our sampling results.
\begin{theorem}\label{thm:3}
    If $f\in B$ is right differentiable up to order $n=\mathrm{dim}(G)$
    and the derivatives $R^{\alpha} f$ are in $B$ for
    $1\leq |\alpha| \leq n $,
    then 
    \begin{equation*}
      \| M_r^\epsilon f \|_B \leq C_\epsilon \sum_{1\leq |\alpha| \leq n }
      \| R^{\alpha} f\|_B
    \end{equation*}
    Here $C_\epsilon\to 0$ as $\epsilon\to 0$.
\end{theorem}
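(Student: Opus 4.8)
The plan is to feed the pointwise estimate of Lemma~\ref{lem:5} into the norm of $B$ and then interchange that norm with the finitely many integrals over the parameters $t_k$, exactly in the spirit of the $L^2$-computation closing the proof of Lemma~\ref{lem:2}, but now carried out abstractly in the solid space $B$ rather than in $L^2(G)$. First I would apply $\|\cdot\|_B$ to the pointwise bound of Lemma~\ref{lem:5}; since the right-hand side is a finite sum of nonnegative terms, the triangle inequality distributes the norm over the double sum in $\alpha$ and $\delta$, leaving for each pair $(\alpha,\delta)$ with $|\delta|=|\alpha|$ the quantity
\[
\left\| \int_{-\epsilon}^\epsilon\cdots\int_{-\epsilon}^\epsilon
 |R^\alpha f(\,\cdot\,\tau_\delta(t_1,\dots,t_n)^{-1})|\,(dt_1)^{\delta_1}\cdots(dt_n)^{\delta_n}\right\|_B .
\]
Note in passing that because this dominating function lies in $B$, solidity already gives $M_r^\epsilon f\in B$, so the left-hand side of the theorem is meaningful.

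The key step is a generalized Minkowski inequality: since $B$ is a solid Banach function space, the norm of an integral of nonnegative functions is bounded by the integral of the norms, so the displayed quantity is at most
\[
\int_{-\epsilon}^\epsilon\cdots\int_{-\epsilon}^\epsilon
 \big\| R^\alpha f(\,\cdot\,\tau_\delta(t_1,\dots,t_n)^{-1})\big\|_B\,(dt_1)^{\delta_1}\cdots(dt_n)^{\delta_n}.
\]
Working with the pointwise absolute values, rather than a vector-valued Bochner integral, is what lets me avoid assuming that right translation is continuous into $B$; solidity guarantees $\||h|\|_B=\|h\|_B$ and underwrites the interchange, while the sup defining $M_r^\epsilon f$ is measurable because $f$ is continuous.

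Next I would recognize each integrand as a right translate, $R^\alpha f(\,\cdot\,\tau_\delta(t)^{-1})=r_{\tau_\delta(t)^{-1}}R^\alpha f$, with $\tau_\delta(t)^{-1}$ ranging over the compact set $U_\epsilon^{-1}$. Right invariance in the form \eqref{eq:3} then bounds $\|r_{\tau_\delta(t)^{-1}}R^\alpha f\|_B\le C_{U_\epsilon^{-1}}\|R^\alpha f\|_B$ uniformly in $t$. As the integrand is now constant in $t$, integrating over the $|\delta|=|\alpha|$ copies of $[-\epsilon,\epsilon]$ contributes a factor $(2\epsilon)^{|\alpha|}$, and summing over the finitely many $\delta$ with $|\delta|=|\alpha|$ gives a combinatorial constant. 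Collecting everything yields
\[
\|M_r^\epsilon f\|_B \le C_\epsilon \sum_{1\le|\alpha|\le n} (2\epsilon)^{|\alpha|}\, C_{U_\epsilon^{-1}}\, \binom{n}{|\alpha|}\, \|R^\alpha f\|_B ,
\]
which has the asserted form once all $\epsilon$-dependent prefactors are absorbed into a single constant, still denoted $C_\epsilon$.

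Finally, for the decay I would observe that every surviving term carries a factor $(2\epsilon)^{|\alpha|}$ with $|\alpha|\ge 1$, while the two remaining $\epsilon$-dependent quantities stay bounded as $\epsilon\to 0$: the constant from Lemma~\ref{lem:5} is monotone, $C_{\epsilon'}\le C_\epsilon$ for $\epsilon'\le\epsilon$, and $C_{U_\epsilon^{-1}}$ decreases as $U_\epsilon^{-1}$ shrinks, so both are dominated by their values at any fixed $\epsilon_0$. Hence the product tends to $0$. The main obstacle I anticipate is the rigorous justification of the generalized Minkowski interchange in the abstract solid space together with the measurability of the parametrized family $t\mapsto R^\alpha f(\,\cdot\,\tau_\delta(t)^{-1})$; once solidity reduces everything to nonnegative scalar integrands, the remainder is the bookkeeping of constants described above.
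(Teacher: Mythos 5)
Your proposal is correct and follows essentially the same route as the paper: apply Lemma~\ref{lem:5}, distribute the $B$-norm over the finite sum of terms $T_{\alpha,\delta}f$, move the norm inside the parameter integrals by the (generalized) Minkowski inequality, bound each translate uniformly via \eqref{eq:3}, and extract the factor $(2\epsilon)^{|\delta|}$ with $|\delta|\geq 1$ to get $C_\epsilon\to 0$. Your additional remarks on solidity, measurability of the oscillation, and monotonicity of the constants only make explicit what the paper leaves implicit.
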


\begin{proof}
  As in the proof of the previous lemma let
  \begin{equation*}
    T_{\alpha,\delta} f(x)
    = \underbrace{\int_{-\epsilon}^\epsilon 
      \dots \int_{-\epsilon}^\epsilon}_{\text{$|\delta|$ integrals}}
    |R^{\alpha} f(x\tau_\delta(t_1,\dots,t_n)^{-1})|\, 
    (dt_1)^{\delta_1}    (dt_2)^{\delta_2}\dots (dt_n)^{\delta_n}
  \end{equation*}
  We now show that there is a constant $C$ (only depending on $U$ and $B$)
  such that 
  \begin{equation*}
    \| T_{\alpha,\delta}f \|_{B} \leq C (2\epsilon)^{|\delta|} 
    \| R^{\alpha} f\|_B
  \end{equation*}
  For this we use the Minkowski inequality to get
  \begin{align*}
    \| T_{\alpha,\delta}f \|_{B}
    &= \left\| \int_{-\epsilon}^\epsilon 
      \dots \int_{-\epsilon}^\epsilon
    |r_{\tau_\delta(t_1,\dots,t_n)^{-1}} R^{\alpha}  f |\, 
    (dt_1)^{\delta_1}    (dt_2)^{\delta_2}\dots (dt_n)^{\delta_n}
    \right\|_B \\
    &\leq 
    \int_{-\epsilon}^\epsilon 
      \dots \int_{-\epsilon}^\epsilon
    \|r_{\tau_\delta(t_1,\dots,t_n)^{-1}} R^{\alpha}  f \|_B \, 
    (dt_1)^{\delta_1}    (dt_2)^{\delta_2}\dots (dt_n)^{\delta_n}
  \end{align*}
  According to \eqref{eq:3} let 
  $C$ be the smallest constant such that for all $f\in B$ and
  for all $u\in U_\epsilon$ we have $\| r_{u^{-1}} f\|_B \leq C \| f\|_B$.
  Then 
  \begin{align*}
    \| T_{\alpha,\delta}f \|_{B}
    &\leq 
    \int_{-\epsilon}^\epsilon 
      \dots \int_{-\epsilon}^\epsilon
    C \| R^{\alpha}  f \|_B \, 
    (dt_1)^{\delta_1}    (dt_2)^{\delta_2}\dots (dt_n)^{\delta_n}
    \leq C (2\epsilon)^{|\delta|}  \| R^{\alpha}  f \|_B \, 
  \end{align*}
  Since $M_r^\epsilon f$ can be estimated by a finite sum of terms of the type
  $T_{\alpha,\delta}f$ with $|\delta| \geq 1$
  the triangle inequality can be used to finish
  the proof.
\end{proof}

\begin{corollary}
  \label{cor:1}
  If the functions in $B_\phi$ are smooth and the mappings
  $$
  B_\phi \ni f \mapsto R^{\alpha}f \in B
  $$
  are continuous for $|\alpha|\leq \mathrm{dim}(G)$, then
  there is a $C_\epsilon$ such that 
  \begin{equation*}
    \| M_r^\epsilon f \|_B \leq C_\epsilon \| f\|_B
  \end{equation*}
  with $C_\epsilon \to 0$ as $\epsilon\to 0$.
\end{corollary}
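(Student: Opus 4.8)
The plan is to obtain this as an essentially immediate consequence of Theorem~\ref{thm:3}, the work of the section having already been done there. The only ingredients I need to supply are (i) that the hypotheses of Theorem~\ref{thm:3} are met under the assumptions of the corollary, and (ii) that the continuity assumption converts the right-hand side of Theorem~\ref{thm:3} into a multiple of $\|f\|_B$ with a constant that is independent of $\epsilon$.

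First I would observe that the assumption that every $f\in B_\phi$ is smooth guarantees in particular that $f$ is right differentiable up to order $n=\dim(G)$, so the differentiability hypothesis of Theorem~\ref{thm:3} holds for each $f\in B_\phi$. Next I would use the key structural fact that each $R^\alpha$ is a \emph{linear} map $B_\phi\to B$: for a linear map between Banach spaces, continuity is equivalent to boundedness, so the assumed continuity yields, for each multi-index $\alpha$ with $1\le|\alpha|\le n$, a constant $C_\alpha$ with
\begin{equation*}
  \| R^\alpha f\|_B \leq C_\alpha \| f\|_B
  \qquad\text{for all } f\in B_\phi .
\end{equation*}
Crucially these constants $C_\alpha$ depend only on the operators $R^\alpha$ and on $B_\phi$, not on $\epsilon$.

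With these in hand I would simply apply Theorem~\ref{thm:3} and substitute the above bounds term by term:
\begin{equation*}
  \| M_r^\epsilon f \|_B
  \leq C_\epsilon \sum_{1\leq |\alpha| \leq n } \| R^\alpha f\|_B
  \leq C_\epsilon \Bigl( \sum_{1\leq |\alpha| \leq n } C_\alpha \Bigr) \| f\|_B .
\end{equation*}
Since the index set $\{\alpha : 1\le|\alpha|\le n\}$ is finite, the sum $\sum_\alpha C_\alpha$ is a finite constant, and I would absorb it into a new constant, still denoted $C_\epsilon$, giving $\| M_r^\epsilon f \|_B \le C_\epsilon \| f\|_B$. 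Because the constant produced by Theorem~\ref{thm:3} tends to $0$ as $\epsilon\to 0$ while $\sum_\alpha C_\alpha$ is a fixed factor independent of $\epsilon$, the new constant likewise satisfies $C_\epsilon\to 0$ as $\epsilon\to 0$.

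There is essentially no obstacle here: the analytic heavy lifting — the iterated use of the fundamental theorem of calculus, the control of the adjoint-representation coefficients via Lemma~\ref{lem:1}, and the Minkowski-inequality estimate $\|T_{\alpha,\delta}f\|_B\le C(2\epsilon)^{|\delta|}\|R^\alpha f\|_B$ — is all contained in Lemma~\ref{lem:5} and Theorem~\ref{thm:3}. The only point that warrants a sentence of care is the passage from the hypothesis ``the maps $f\mapsto R^\alpha f$ are continuous'' to the quantitative bounds $\|R^\alpha f\|_B\le C_\alpha\|f\|_B$, which is exactly the standard equivalence of continuity and boundedness for linear operators and which also reassures us that $R^\alpha f\in B$ for every $f\in B_\phi$, so that Theorem~\ref{thm:3} genuinely applies.
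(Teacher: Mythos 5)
Your proposal is correct and matches the paper's intent exactly: the corollary is stated without proof precisely because it follows immediately from Theorem~\ref{thm:3} once the assumed continuity of the linear maps $f\mapsto R^\alpha f$ is converted into bounds $\|R^\alpha f\|_B\le C_\alpha\|f\|_B$ and the finite sum is absorbed into the constant. Your added remark that these $C_\alpha$ are independent of $\epsilon$, so that $C_\epsilon\to 0$ survives, is the one point worth making explicit, and you made it.
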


It is typically not hard to show that if $f\in B_\phi$ then
\begin{equation*}
  R^\alpha f = f*R^\alpha \phi
\end{equation*}
Thus we need only check that convolution with $R^\alpha \phi$ is
continuous. In the following case we will use differentiability of
the kernel to obtain estimates of 
$M_r^\epsilon f$ for $f\in B_\phi$.
This ties our results with \cite{Grochenig1991},
though we do not require the kernel to be integrable.
The previous results are more general and
are in particular very easy to verify for band-limited functions, 
whereas the following theorem is harder to
apply in that case (the author is at present not aware of
an application of this theorem to band-limited functions).

\begin{theorem}\label{thm:4}
  If the mappings 
  $$B_\phi \ni f\mapsto |f|*|R^\alpha\phi|\in B$$ 
  are continuous for $|\alpha|\leq \mathrm{dim}(G)$ then
  there is a $C_\epsilon$ such that
  \begin{equation*}
    \| M_r^\epsilon f\|_B \leq C_\epsilon \| f\|_B 
  \end{equation*}
  with $C_\epsilon \to 0$ as $\epsilon\to 0$.
\end{theorem}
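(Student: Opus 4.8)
The plan is to reduce the statement to Corollary~\ref{cor:1}. The only gap between the two hypotheses is that Corollary~\ref{cor:1} asks for continuity of $f\mapsto R^\alpha f$, whereas here we are handed continuity of $f\mapsto |f|*|R^\alpha\phi|$; solidity of $B$ will bridge this gap.

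First I would record that every $f\in B_\phi$ is smooth and that $R^\alpha f = f*R^\alpha\phi$ for $1\leq|\alpha|\leq\mathrm{dim}(G)$. This comes from differentiating the reproducing formula $f=f*\phi$ under the integral sign, letting the right derivatives fall on the kernel $\phi$; since $\phi$ is differentiable up to order $n=\mathrm{dim}(G)$ (which is implicit in the hypothesis, as $R^\alpha\phi$ must make sense), the identity $R(X)[f*\phi]=f*R(X)\phi$ follows and iterates to $R^\alpha f=f*R^\alpha\phi$. Estimating the convolution pointwise by moving the absolute value inside the integral then gives the majorization
\begin{equation*}
  |R^\alpha f(x)| = |f*R^\alpha\phi(x)| \leq (|f|*|R^\alpha\phi|)(x)
\end{equation*}
for every $x\in G$.

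Because $B$ is solid and $|f|*|R^\alpha\phi|\in B$ by hypothesis, this pointwise bound forces $R^\alpha f\in B$ together with the norm estimate $\|R^\alpha f\|_B\leq\| |f|*|R^\alpha\phi| \|_B$. Consequently the assumed continuity of $f\mapsto |f|*|R^\alpha\phi|$ from $B_\phi$ to $B$ transfers to continuity of $f\mapsto R^\alpha f$ for each $|\alpha|\leq\mathrm{dim}(G)$, and Corollary~\ref{cor:1} applies verbatim to yield $\|M_r^\epsilon f\|_B\leq C_\epsilon\|f\|_B$ with $C_\epsilon\to 0$.

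If one prefers a self-contained argument, the same ingredients feed directly into the machinery of Lemma~\ref{lem:5} and Theorem~\ref{thm:3}: substitute the majorant $|f|*|R^\alpha\phi|$ for $|R^\alpha f|$ in the pointwise oscillation bound of Lemma~\ref{lem:5}, then run the Minkowski estimate of Theorem~\ref{thm:3} on the resulting operators $T_{\alpha,\delta}$ to obtain $\|T_{\alpha,\delta}\|_B\leq C(2\epsilon)^{|\delta|}\| |f|*|R^\alpha\phi| \|_B$, and finish with the continuity hypothesis and the triangle inequality. I expect the main obstacle to be the justification of differentiation under the integral sign in the first step, namely verifying the dominated-convergence hypotheses that let $R^\alpha$ pass onto $\phi$ and simultaneously guarantee smoothness of $f$, since every subsequent step is either an appeal to solidity or a direct invocation of the already-proven estimates.
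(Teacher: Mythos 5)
Your reduction to Corollary~\ref{cor:1} leaves a genuine gap, and it is precisely the gap that the paper's own proof of Theorem~\ref{thm:4} is designed to avoid. To invoke Corollary~\ref{cor:1} (or to feed $|R^\alpha f|\leq |f|*|R^\alpha\phi|$ into Lemma~\ref{lem:5}) you must first establish that every $f\in B_\phi$ is differentiable of order $\mathrm{dim}(G)$ and that $R^\alpha f=f*R^\alpha\phi$. You defer this as ``the main obstacle,'' but it does not follow from the stated hypotheses: passing $R(X)$ under the integral in $f*R^\beta\phi$ requires a $y$-integrable majorant, independent of $t$, for the difference quotients; the natural candidates are either the local oscillation of the order-$(|\beta|+1)$ derivative of $\phi$ (which at top order would need derivatives of $\phi$ of order $\mathrm{dim}(G)+1$, not assumed to exist) or a supremum $\sup_{|s|\leq\epsilon}|R(X)R^\beta\phi(y^{-1}xe^{sX})|$, whose convolution against $|f|$ is not controlled by the hypothesis that $|f|*|R^\alpha\phi|\in B$. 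The text immediately preceding the theorem signals exactly this: the identity $R^\alpha f=f*R^\alpha\phi$ is ``typically not hard to show'' but is an extra verification, and Theorem~\ref{thm:4} is offered as an alternative for when one works only with the kernel.

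The missing idea is to apply the oscillation machinery to $\phi$ rather than to $f$. The reproducing formula gives, with no differentiability of $f$ at all, the pointwise bound $M_r^\epsilon f(x)\leq\int|f(y)|\,M_r^\epsilon\phi(y^{-1}x)\,dy$. Lemma~\ref{lem:5} applied to $\phi$ (which is differentiable by hypothesis, since $R^\alpha\phi$ appears in the assumptions) bounds $M_r^\epsilon\phi$ pointwise by iterated integrals of translates of $|R^\alpha\phi|$, so that $M_r^\epsilon f$ is dominated by a sum of terms of the form $\int\cdots\int r_{u}\bigl(|f|*|R^\alpha\phi|\bigr)\,dt_1\cdots dt_{|\delta|}$ with $u\in U_\epsilon$. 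Minkowski's inequality, the right invariance \eqref{eq:3} of $B$, and the assumed continuity of $f\mapsto|f|*|R^\alpha\phi|$ then give $\|M_r^\epsilon f\|_B\leq C_\epsilon\|f\|_B$, with $C_\epsilon\to0$ because each iterated integral contributes a factor $(2\epsilon)^{|\delta|}$. If in a concrete application you can justify $R^\alpha f=f*R^\alpha\phi$, your route does work --- but then you are already in the setting of Corollary~\ref{cor:1}, and Theorem~\ref{thm:4} would have nothing new to offer.
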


\begin{proof}
  For $f\in B_\phi$ we have that
  \begin{align*}
    M_r^\epsilon f(x)
    &= \sup_{u\in U_\epsilon} |f(xu^{-1}) - f(x)| \\
    &= \sup_{u\in U_\epsilon} 
    \left| 
      \int f(y)[\phi(y^{-1}xu^{-1}) - \phi(y^{-1}x)] \,dy \right| \\
    &\leq 
    \int |f(y)|M_r^\epsilon\phi(y^{-1}x) \,dy 
  \end{align*}
  
  Since $\phi$ is differentiable we know that
  $M_r^\epsilon\phi(y^{-1}x)$ can be estimated by
  \begin{equation*}
    M_r^\epsilon \phi(x) \leq C_\epsilon 
    \sum_{1\leq |\alpha|\leq n} 
    \sum_{|\delta|=|\alpha|}
    \underbrace{\int_{-\epsilon}^\epsilon  
      \cdots \int_{-\epsilon}^\epsilon}_{\text{$|\delta|$ integrals}}
    |R^\alpha \phi(x\tau_\delta(t_1,\dots,t_n)^{-1})|
    (dt_1)^{\delta_1} \cdots (dt_n)^{\delta_n}
  \end{equation*}
  Thus the assumption that the convolutions 
  $|f|*|R^\alpha \phi|$ are continuous from $B_\phi$ to $B$
  and the right invariance
  of $B$ can be used to finish the proof.
\end{proof}

\subsection{Atomic decompositions and frames for 
  reproducing kernel Banach spaces}
In this section we will derive sampling theorems and atomic decompositions
for the reproducing kernel Banach space $B_\phi$. The results are 
similar to those in  
\cite{Feichtinger1988,Feichtinger1989a,Feichtinger1989b,Grochenig1991} and
more recently \cite{Rauhut2005} and \cite{Nashed2010}, but unlike
these references we do not require integrability of the reproducing kernel.

The following sampling theorem can be utilized together with
the result of Corollary~\ref{cor:1} and Theorem~\ref{thm:4}. 

\begin{theorem}\label{thm:9}
  Assume there is a $C_\epsilon$ such that $C_\epsilon\to 0$ for
  $\epsilon\to 0$ such that for all $f\in B_\phi$ 
  we have $\| M_r^\epsilon f\|_B \leq C_\epsilon\| f\|_{B}$.
  We can choose $\epsilon$ small enough
  that for every $U_\epsilon$-relatively separated set
    $\{ x_i\}$ the norms $\| \{ f(x_i) \}\|_{B^\#}$
    and $\| f\|_B$ are equivalent. 
\end{theorem}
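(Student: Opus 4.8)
The plan is to establish the norm equivalence by proving two inequalities, each following the template already worked out in the proof of Theorem~\ref{thm:1}. The hypothesis $\|M_r^\epsilon f\|_B \leq C_\epsilon \|f\|_B$ with $C_\epsilon \to 0$ is the engine; the combinatorics of $U_\epsilon$-relatively separated sets and the sequence space $B^\#$ provide the bookkeeping. I would first fix $\epsilon$ small enough that $C_\epsilon < 1$, and choose a $U_\epsilon$-relatively separated set $\{x_i\}$ together with disjoint sets $U_i \subseteq x_i U_\epsilon$ with $G = \cup_i U_i$. The crucial observation is that for $x \in U_i \subseteq x_i U_\epsilon$ we have $x_i^{-1}x \in U_\epsilon$, so $x = x_i u^{-1}$ for some $u \in U_\epsilon$, and hence the pointwise bound $|f(x) - f(x_i)| \leq M_r^\epsilon f(x_i)$ holds. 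Summing against the indicators $1_{U_i}$ and using solidity of $B$ gives
\begin{equation*}
  \left\| f - \sum_i f(x_i) 1_{U_i} \right\|_B
  \leq \left\| \sum_i M_r^\epsilon f(x_i)\, 1_{U_i} \right\|_B
  \leq C_\epsilon \| f\|_B,
\end{equation*}
where the last step invokes the hypothesis after comparing the step function $\sum_i M_r^\epsilon f(x_i) 1_{U_i}$ with $M_r^\epsilon f$ (up to the relatively-separated overlap constant $N$ and the norm-equivalence properties of $B^\#$ recorded in Lemma~3.5 of \cite{Feichtinger1989a}).

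\emph{Lower bound on the samples.} With this oscillation estimate in hand, I would bound $\|\{f(x_i)\}\|_{B^\#}$ above exactly as in Theorem~\ref{thm:1}: writing $f(x_i)1_{x_iU_\epsilon} = (f(x_i) - f)1_{x_iU_\epsilon} + f\,1_{x_iU_\epsilon}$, summing, and applying solidity together with the fact that $\|\sum_i \lambda_i 1_{x_iU_\epsilon}\|_B$ defines the norm on $B^\#(X)$, yields
\begin{equation*}
  \| \{ f(x_i)\}\|_{B^\#}
  \leq C\left\| f - \sum_i f(x_i)1_{U_i}\right\|_B + C\|f\|_B
  \leq C(1 + C_\epsilon)\|f\|_B.
\end{equation*}
Here I use $1_{U_i} \leq 1_{x_iU_\epsilon}$ and solidity to dominate the first sum by the oscillation term, and that the finitely-overlapping indicators $\sum_i 1_{x_iU_\epsilon}$ are bounded by $N$.

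\emph{Upper bound on the samples.} Conversely, the triangle inequality gives
\begin{equation*}
  \|f\|_B
  \leq \left\|f - \sum_i f(x_i)1_{U_i}\right\|_B
  + \left\|\sum_i f(x_i)1_{U_i}\right\|_B
  \leq C_\epsilon \|f\|_B + D\,\|\{f(x_i)\}\|_{B^\#},
\end{equation*}
and since $C_\epsilon < 1$ this rearranges to $\frac{1-C_\epsilon}{D}\|f\|_B \leq \|\{f(x_i)\}\|_{B^\#}$. Combining the two displays produces the claimed equivalence of $\|\{f(x_i)\}\|_{B^\#}$ and $\|f\|_B$.

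\emph{The main obstacle.} The genuinely delicate point, as opposed to the routine $\SL$-free arithmetic above, is the passage from the pointwise oscillation bound to the Banach-space estimate $\|\sum_i M_r^\epsilon f(x_i) 1_{U_i}\|_B \leq C_\epsilon \|f\|_B$. One cannot simply identify $\sum_i M_r^\epsilon f(x_i) 1_{U_i}$ with $M_r^\epsilon f$; instead I would enlarge the oscillation window (replacing $U_\epsilon$ by a fixed dilate such as $U_{2\epsilon}$ so that $M_r^\epsilon f(x_i) \leq M_r^{2\epsilon} f(x)$ for every $x \in x_i U_\epsilon$) and then use solidity together with the bounded overlap $N$ of the relatively separated family to dominate the step function by a constant multiple of $M_r^{2\epsilon}f$ in $B$. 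This is exactly the maneuver that forces the choice of $U_\epsilon$-relatively separated sets rather than arbitrary coverings, and it is where the properties of $B^\#(X)$ from \cite{Feichtinger1989a} — invariance under comparable neighbourhoods and under perturbation of the points — do the real work.
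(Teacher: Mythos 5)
Your two-inequality skeleton (sample norm bounded above by $N(1+C_\epsilon)\|f\|_B$, and $(1-C_\epsilon)\|f\|_B$ bounded by the sample norm after absorbing the oscillation term) is exactly the paper's argument, and your lower bound via a partition into disjoint $U_i$ is an acceptable substitute for the paper's $U_\epsilon$-BUPU. The one place you diverge is the pointwise oscillation estimate, and the "main obstacle" you identify there is self-inflicted: you bound $|f(x)-f(x_i)|$ by $M_r^\epsilon f(x_i)$, i.e.\ you base the oscillation at the sample point, which forces you to compare the step function $\sum_i M_r^\epsilon f(x_i)1_{U_i}$ with a dilated maximal function. The paper instead bases the oscillation at the running point: for $x\in x_iU_\epsilon$ one has $u:=x_i^{-1}x\in U_\epsilon$ and hence $x_i=xu^{-1}$, so by the very definition $M_r^\epsilon f(x)=\sup_{u\in U_\epsilon}|f(xu^{-1})-f(x)|$ one gets $|f(x_i)-f(x)|\le M_r^\epsilon f(x)$ directly (this is precisely why the definition carries the $u^{-1}$). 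Then $\sum_i|f(x)-f(x_i)|1_{x_iU_\epsilon}(x)\le N\,M_r^\epsilon f(x)$ with no enlargement, and the hypothesis applies verbatim. Two further cautions about your version: your assertion that $x\in x_iU_\epsilon$ gives $x=x_iu^{-1}$ with $u\in U_\epsilon$ tacitly assumes $U_\epsilon$ is symmetric, which the coordinate box $U_\epsilon=\{\prod_k e^{t_kX_k}:|t_k|\le\epsilon\}$ is not on a noncommutative group; and the claimed containment behind $M_r^\epsilon f(x_i)\le M_r^{2\epsilon}f(x)$ requires $U_\epsilon U_\epsilon^{-1}\subseteq U_{2\epsilon}$, which again fails literally for these boxes, though $U_\epsilon U_\epsilon^{-1}\subseteq U_{K\epsilon}$ for a fixed $K$ and small $\epsilon$ does hold by smoothness of multiplication, so your detour can be repaired (the hypothesis gives $C_{K\epsilon}\to 0$ as well). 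The repaired detour proves the theorem, but you should notice that the direct route makes the entire "obstacle" paragraph unnecessary.
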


\begin{remark}\label{rem:1}
  We would like to note that 
  Theorem~\ref{thm:9} is sufficient to prove that
  sampling provides a Banach frame in the case $B=L^p(G)$ according
  to \cite[Theorem 3.1]{Han2009}. 
  Thus we are able to obtain
  sampling theorems for cases where the convolution with the kernel
  is not a continuous projection.
\end{remark}

\begin{proof}
  Choose $\epsilon$ small enough that $C_\epsilon<1$ 
  and let $\{ x_i \}$ be $U_\epsilon$-relatively separated. 
  Note that there is an $N$ such that each
  $x_iU_\epsilon$ overlap with at most $N$ other $x_jU_\epsilon$.
  The following calculation shows that $\{ f(x_i)\} \in B^\#$.
  \begin{align*}
    \sum_i |f(x_i)| 1_{x_iU_\epsilon}(x)
    &\leq \sum_i |f(x)- f(x_i)| 1_{x_iU_\epsilon}(x) 
    + \sum_i |f(x)| 1_{x_iU_\epsilon}(x) \\
    &\leq \sum_i M_r^\epsilon f(x) 1_{x_iU_\epsilon}(x) 
    + \sum_i |f(x)| 1_{x_iU_\epsilon}(x) \\
    &\leq N(M_r^\epsilon f(x) + |f(x)|)
  \end{align*}
  Both the functions $M_r^\epsilon f$ and $|f|$ are in $B$ by
  assumption and thus
  \begin{align*}
    \| \{f(x_i) \}\|_{B^\#}
    = \left\| \sum_i |f(x_i)| 1_{x_iU_\epsilon}\right\|_{B}
    \leq N (C_\epsilon +1) \| f\|_B
  \end{align*}
  
  We now show that  
  $(1-C_\epsilon) \| f\|_B \leq \| \{f(x_i) \}\|_{B^\#}$.
  Let $\psi_i$ be a $U_\epsilon$-uniform bounded partition of unity,
  i.e. $\mathrm{supp}(\psi_i)\subseteq x_iU_\epsilon$ and 
  $\sum_i \psi_i =1$ a.e. 
  \begin{align*}
    |f(x)| 
    &= \sum_i |f(x)|\psi_i(x)  \\
    &\leq \sum_i |f(x)-f(x_i)|\psi_i(x) + \sum_i |f(x_i)|\psi_i(x) \\
    &\leq M_r^\epsilon f(x) + \sum_i |f(x_i)| 1_{x_iU_\epsilon}(x) \\
  \end{align*}
  Therefore 
  \begin{equation*}
    \| f\|_B 
    \leq \| M_r^\epsilon f\|_B + \|\{ f(x_i)\} \|_B^{\#}
    \leq C_\epsilon \| f\|_B + \|\{ f(x_i)\} \|_B^{\#}
  \end{equation*}
  By assumption $C_\epsilon < 1$ so we obtain
  \begin{equation*}
    (1-C_\epsilon) \| f\|_B \leq \|\{ f(x_i)\} \|_B^{\#}
  \end{equation*}
  This finishes the proof.
\end{proof}

The following theorem provides a reconstruction operator in the
case where convolution with $\phi$ is continuous on $B$. 

\begin{theorem}\label{thm:8}
  Assume there is a $C_\epsilon$ such that $C_\epsilon\to 0$ for
  $\epsilon\to 0$ such that for all $f\in B_\phi$ 
  we have $\| M_r^\epsilon f\|_B \leq C_\epsilon\| f\|_{B}$.
  If convolution with $\phi$ is continuous
  on $B$, then
  we can choose $\epsilon$ small enough
  that for any $U_\epsilon$-BUPU $\{ \psi_i\}$
  with $\mathrm{supp}(\psi_i)\subseteq x_iU_\epsilon$ 
  the operator $T_1:B_\phi \to B_\phi$ given by
  \begin{equation*}
    T_1f = \sum_i f(x_i)(\psi_i*\phi)
  \end{equation*}
  is invertible. 
  The convergence of the sum is pointwise, and
  if $C_c(G)$ is dense in $B$ then
  the convergence is also in norm.
\end{theorem}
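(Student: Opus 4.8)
The plan is to realize $T_1$ as a small perturbation of the identity on $B_\phi$ and to invert it by a Neumann series. First I would verify that $T_1$ is well defined and maps $B_\phi$ into itself. Writing $h = \sum_i f(x_i)\psi_i$, we have $T_1 f = h*\phi$, and Theorem~\ref{thm:9} guarantees $\{f(x_i)\}\in B^\#$. Since each $\psi_i$ satisfies $0\le\psi_i\le 1_{x_iU_\epsilon}$, the pointwise bound $|h|\le \sum_i |f(x_i)|1_{x_iU_\epsilon}\in B$ together with solidity of $B$ gives $h\in B$, and continuity of convolution with $\phi$ gives $T_1 f = h*\phi\in B$. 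Because $\phi*\phi=\phi$, associativity of convolution yields $(h*\phi)*\phi = h*\phi$, so $T_1 f\in B_\phi$.

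The heart of the argument is the estimate $\| f - T_1 f\|_B \le C_\phi C_\epsilon \| f\|_B$, where $C_\phi$ denotes the norm of convolution with $\phi$ on $B$. Starting from the reproducing formula $f(x) = \int_G f(y)\phi(y^{-1}x)\,dy$ and inserting the partition of unity $\sum_i \psi_i = 1$, I would subtract the analogous expression for $T_1 f$ to obtain $f - T_1 f = g*\phi$ with $g(y) = \sum_i (f(y)-f(x_i))\psi_i(y)$. On $\mathrm{supp}(\psi_i)\subseteq x_iU_\epsilon$ one has $y = x_i u$ with $u\in U_\epsilon$, so that $|f(y)-f(x_i)| = |f(y)-f(yu^{-1})| \le M_r^\epsilon f(y)$; since $\sum_i\psi_i = 1$ this produces the pointwise domination $|g|\le M_r^\epsilon f$. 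Solidity of $B$ then gives $\| g\|_B \le \| M_r^\epsilon f\|_B \le C_\epsilon \| f\|_B$, and applying continuity of convolution with $\phi$ completes the estimate.

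Choosing $\epsilon$ small enough that $C_\phi C_\epsilon < 1$ forces $\| I - T_1\|_{B_\phi\to B_\phi} < 1$, so $T_1 = I - (I-T_1)$ is invertible on $B_\phi$ with inverse $\sum_{k\ge 0}(I-T_1)^k$. For the convergence claims, the partial sums of $h=\sum_i f(x_i)\psi_i$ converge pointwise because the sum is locally finite, and they are dominated in $B^\#$-norm by $\{f(x_i)\}$; when $C_c(G)$ is dense in $B$ the sum $\sum_i f(x_i)\psi_i$ converges in $B$, and continuity of convolution with $\phi$ then propagates this to norm convergence of $\sum_i f(x_i)(\psi_i*\phi)$.

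The main obstacle is the passage from the sum to the integral when identifying $f - T_1 f$ with $g*\phi$: one must justify interchanging $\sum_i$ with $\int_G$, which I would handle by dominated convergence using the $B^\#$-bound from Theorem~\ref{thm:9} together with the standing assumption that convergence in $B$ implies convergence locally in measure. Some care is also needed with the direction of translation, so that it is precisely the right oscillation $M_r^\epsilon$ (defined with $u^{-1}$ acting on the right) that controls $|f(y)-f(x_i)|$ on the support of $\psi_i$.
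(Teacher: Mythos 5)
Your proposal is correct and follows essentially the same route as the paper: bound $|f - \sum_i f(x_i)\psi_i|$ pointwise by $M_r^\epsilon f$ using the BUPU and the definition of the right oscillation, pass to norms by solidity, convolve with $\phi$ and use the reproducing formula $f = f*\phi$ to get $\|f - T_1f\|_B \leq C C_\epsilon \|f\|_B$, justify interchanging the sum with the convolution integral by dominated convergence (the paper dominates the partial sums by $M_r^\epsilon f + |f|$, you by $\sum_i |f(x_i)|1_{x_iU_\epsilon}$ — both lie in $B$ and work), and invert by Neumann series. Your explicit tracking of the convolution norm $C_\phi$ and the verification that $T_1f \in B_\phi$ via $\phi*\phi=\phi$ are slightly more careful than the paper's write-up but do not change the argument.
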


\begin{proof}
  We have that
  \begin{equation*}
    \left| f(x) - \sum_i f(x_i)\psi_i(x) \right|
    \leq \sum_i |f(x)-f(x_i)|\psi_i(x)
    \leq \sum_i M_r^\epsilon f(x)\psi_i(x)
    = M_r^\epsilon f(x)
  \end{equation*}
  so the solidity of $B$ ensures that
  \begin{equation*}
    \left\| 
      f - \sum f(x_i)\psi_i
    \right\|_B
    \leq C_\epsilon \| f\|_B
  \end{equation*}
  The continuity of convolution with the reproducing kernel gives
  \begin{equation*}
    \left\| 
      f - \left( \sum f(x_i)\psi_i\right)*\phi
    \right\|_{B_\phi}
    \leq C_\epsilon \| f\|_{B_\phi}
  \end{equation*}
  
  Lastly we show that the operator 
  \begin{equation*}
    \left( \sum f(x_i)\psi_i\right)*\phi
  \end{equation*}
  is indeed the operator $T$.
  To do so we use the dominated convergence theorem
  to swap the sum and the integral in
  \begin{equation*}
    \int 
    \left( \sum_{i} f(x_i) \psi_i(x)\right)\phi(x^{-1}y)
    \,dx
  \end{equation*}
  The sum $\sum_{i} f(x_i) \psi_i(x)$
  is to be understood as the pointwise limit of partial sums.
  Any partial sum $F_P(x) = \sum_{i\in P} f(x_i) \psi_i(x)$
  is dominated by
  \begin{align*}
    |F_P(x)|
    &\leq \sum_{i\in P} |f(x_i)| \psi_i(x) \\
    &\leq \sum_{i\in P} |f(x_i)-f(x)| \psi_i(x) 
    + \sum_{i\in P} |f(x)| \psi_i(x) \\
    &\leq \sum_{i\in P} M_r^\epsilon f(x) \psi_i(x) 
    + \sum_{i\in P} |f(x)| \psi_i(x) \\
    &\leq M_r^\epsilon f(x) + |f(x)|
  \end{align*}
  Both $M_r^\epsilon f$ and $|f|$ are in $B$ and therefore (by our
  assumptions on $\phi$) the integrable function
  $(M_r^\epsilon f(x)+|f(x)|)|\phi(x^{-1}y)|$ dominates
  $|F_N(x)\phi(x^{-1}y)|$. This allows the sum
  and integral to be swapped to get
  \begin{equation*}
    \left\| 
      f - \sum f(x_i)(\psi_i*\phi)
    \right\|_{B_\phi}
    \leq C_\epsilon \| f\|_{B_\phi}
  \end{equation*}
  Choosing $\epsilon$ small enough that $C_\epsilon<1$ the operator $T$ can
  be inverted using its Neumann series. 

  In \cite{Rauhut2005} it has been shown that if the compactly supported
  continuous functions are dense, then the sum converges in norm.
  We therefore skip that part of the proof.
\end{proof}

The previous result in conjunction with Corollary~\ref{cor:1} 
and Theorem~\ref{thm:4} only requires continuity involving left
differentiation. We will now state results that also involve
right differentation. 

\begin{lemma}\label{lem:3}
  Assume the convolutions 
  $f\mapsto f*|L^\alpha\phi|$ are continuous $B \to B$ for 
  $|\alpha|\leq \mathrm{dim}(G)$, then
  the mapping
  \begin{equation*}
    B^\# \ni \{ \lambda_i \} \mapsto \sum_i \lambda_i \ell_{x_i}\phi\in B_\phi
  \end{equation*}
  is continuous.
\end{lemma}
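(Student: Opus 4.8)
The plan is to prove boundedness of the synthesis operator by dominating the synthesis sum pointwise by convolutions of a single control function against $\phi$ and its left oscillation. Write $F = \sum_i |\lambda_i| 1_{x_iU_\epsilon}$, so that by definition $\|F\|_B = \|\{\lambda_i\}\|_{B^\#}$ (I may take the defining neighbourhood to be $U_\epsilon$, since $B^\#(X)$ is independent of that neighbourhood up to norm equivalence). The first step is a local comparison: for $z = x_iu \in x_iU_\epsilon$ one has $z^{-1}y = u^{-1}x_i^{-1}y$, so $x_i^{-1}y$ and $z^{-1}y$ differ by left multiplication by $u \in U_\epsilon$, whence $|\phi(x_i^{-1}y)| \le |\phi(z^{-1}y)| + M_l^\epsilon\phi(z^{-1}y)$.

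Second, since this holds for every $z \in x_iU_\epsilon$ and $\mu(x_iU_\epsilon) = \mu(U_\epsilon)$, averaging over $z$ and summing against $|\lambda_i| \ge 0$ yields the pointwise bound
$$\left|\sum_i \lambda_i\ell_{x_i}\phi(y)\right| \le \sum_i|\lambda_i|\,|\phi(x_i^{-1}y)| \le \frac{1}{\mu(U_\epsilon)}\big(F*|\phi| + F*M_l^\epsilon\phi\big)(y),$$
where I have recognised $\int_G F(z)g(z^{-1}y)\,dz = (F*g)(y)$. By solidity of $B$ and the triangle inequality it then suffices to bound $\|F*|\phi|\|_B$ and $\|F*M_l^\epsilon\phi\|_B$ by constant multiples of $\|F\|_B$.

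The first term is immediate: it is the hypothesis for the empty multi-index ($L^\alpha\phi = \phi$, so convolution with $|\phi|$ is continuous). The second term is the crux. Here I insert the left-oscillation estimate of Lemma~\ref{lem:5}, which bounds $M_l^\epsilon\phi$ by a finite sum of iterated integrals of $|L^\alpha\phi(\tau_\delta(t)\,\cdot\,)|$ with $1 \le |\alpha| \le n$. Using positivity of $F$ and Minkowski's integral inequality to pull $F*$ and $\|\cdot\|_B$ inside the $t$-integrals reduces matters to a uniform bound on $\|F*\ell_{\tau_\delta(t)^{-1}}|L^\alpha\phi|\|_B$. The left translate on the second factor is the main obstacle, since the hypothesis controls only convolution with $|L^\alpha\phi|$ itself; I resolve it by the substitution $z = w\tau_\delta(t)$, which converts the left translate of the second factor into a right translate of the first, $F*\ell_{\tau^{-1}}|L^\alpha\phi| = \Delta(\tau)^{\pm1}\,(r_\tau F)*|L^\alpha\phi|$, at the cost of a modular factor $\Delta(\tau)^{\pm1}$ that stays bounded because $\tau = \tau_\delta(t) \in U_\epsilon$ ranges over a compact set. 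The hypothesis then bounds $\|(r_\tau F)*|L^\alpha\phi|\|_B$ by $C_\alpha\|r_\tau F\|_B$, and \eqref{eq:3} bounds $\|r_\tau F\|_B \le C_{U_\epsilon}\|F\|_B$; uniformity in $\tau$ gives the estimate with a single constant.

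Finally, combining the two bounds gives $\|\sum_i\lambda_i\ell_{x_i}\phi\|_B \le C\|\{\lambda_i\}\|_{B^\#}$, which is the asserted continuity. To see the target space is $B_\phi$, note $(\ell_{x_i}\phi)*\phi = \ell_{x_i}(\phi*\phi) = \ell_{x_i}\phi$, so each atom lies in $B_\phi$; applying the estimate to finite partial sums shows they are Cauchy in $B$ (when $C_c(G)$ is dense), and $B_\phi$ being closed in $B$ places the limit in $B_\phi$. I expect the modular-factor and translation bookkeeping in the second term to be the only genuinely delicate point; everything else is a direct use of solidity, Minkowski's inequality, and the standing invariance assumption \eqref{eq:3}.
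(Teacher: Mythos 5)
Your proof of the norm estimate is correct and follows the same route as the paper: the averaging trick over $z\in x_iU_\epsilon$, the splitting $|\phi(x_i^{-1}y)|\le|\phi(z^{-1}y)|+M_l^\epsilon\phi(z^{-1}y)$, and the resulting pointwise domination of the synthesis sum by $\mu(U_\epsilon)^{-1}\bigl(F*|\phi|+F*M_l^\epsilon\phi\bigr)$ with $F=\sum_i|\lambda_i|1_{x_iU_\epsilon}$. In fact you supply a detail the paper suppresses: the paper merely asserts that the hypotheses make $F*M_l^\epsilon\phi$ lie in $B$, whereas you actually carry out the reduction via Lemma~\ref{lem:5}, the substitution converting the left translate of $|L^\alpha\phi|$ into a right translate of $F$ with a modular factor bounded on $U_\epsilon$, and the standing assumption \eqref{eq:3}. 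That bookkeeping is right and worth having.

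There is, however, a gap in your final step. The lemma asserts the map lands in $B_\phi$, i.e.\ that $f=\sum_i\lambda_i\ell_{x_i}\phi$ satisfies $f=f*\phi$, with no density hypothesis on $C_c(G)$; your argument (partial sums Cauchy in $B$, then closedness of $B_\phi$) only works \emph{when} $C_c(G)$ is dense in $B$, as you yourself parenthetically concede, because otherwise $\sum_i|\lambda_i|1_{x_iU_\epsilon}$ converges only pointwise and the partial sums $f_N$ need not be norm-Cauchy. The paper closes this in general by dominated convergence: each finite partial sum $f_N$ is reproduced by $\phi$, the sums converge pointwise to $f$, and $|f_N(y)\phi(y^{-1}x)|$ is dominated by $G(y)|\phi(y^{-1}x)|$ with $G=\mu(U)^{-1}F*(M_l^\epsilon\phi+|\phi|)\in B$ integrable against $|\phi(\cdot^{-1}x)|$, so $f*\phi(x)=\lim_N f_N*\phi(x)=\lim_N f_N(x)=f(x)$. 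Note that the dominating function is exactly the one you already constructed, so the fix costs you one more application of dominated convergence rather than a new idea.
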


\begin{proof}
  Let $\{ \lambda_i\} \in B^\#$ and define
  \begin{equation*}
    f(x) = \sum_i \lambda_i \ell_{x_i}\phi(x)
  \end{equation*}
  with pointwise convergence of the sum.
  We will show that this defines a function in $B$.
  For every $x$ we have
  \begin{align*}
    |f(x)| &\leq
    \sum_i  |\lambda_i| |\phi(x_i^{-1}x)| \\
    &= \mu(U)^{-1}  \int \sum_i  |\lambda_i| 1_{x_iU}(y) |\phi(x_i^{-1}x)| \, dy \\
    &\leq \mu(U)^{-1} \left( \int \sum_i 
      |\lambda_i| 1_{x_iU}(y) |\phi(y^{-1}x)- \phi(x_i^{-1}x)| \, dy  
      + \int \sum_i 
      |\lambda_i| 1_{x_iU}(y) |\phi(y^{-1}x)| \, dy \right)\\
    &\leq \mu(U)^{-1} \left( \int \sum_i 
      |\lambda_i| 1_{x_iU}(y) M_\ell^\epsilon \phi(y^{-1}x) \, dy 
      + \int \sum_i 
      |\lambda_i| 1_{x_iU}(y) |\phi(y^{-1}x)| \, dy\right) \\
    &\leq     \mu(U)^{-1} \left( \sum_i  |\lambda_i| 1_{x_iU})\right)* 
    (M_\ell^\epsilon \phi + |\phi|) (x)
  \end{align*}
  The function $F=\sum_i |\lambda_i| 1_{x_iU}(y)$ is in $B$ and 
  our assumptions ensure that the functions $F*|\phi|$ and
  $F*M_\ell^\epsilon \phi$ are also in $B$. The solidity of $B$ thus 
  ensures that the function $f$ is in $B$.
  
  We will now show that $f$ is reproduced by convolution with $\phi$.
  Note that any partial sum $f_N = \sum_{i=1}^N \lambda_i \ell_{x_i}\phi$
  is in $B$ and us reproduced by convolution by $\phi$.
  We have to show that 
  $f(x)=\lim_{N\to \infty} f_N(x)$ 
  is also reproduced by convolution with  $\phi$.
  The calculation above 
  shows that any partial sum $f_N$ is dominated by the function
  \begin{equation*}
    G = \mu(U)^{-1} \left( \sum_i  |\lambda_i| 1_{x_iU})\right)* 
    (M_\ell^\epsilon \phi + |\phi|) \in B
  \end{equation*}
  Thus we have $|f_N(y)\phi(y^{-1}x)|\leq |G(y)\phi(y^{-1}x)|$ and
  the dominated convergence theorem gives
  \begin{equation*}
    f*\phi(x)
    =  (\lim_{N\to \infty} f_N)*\phi(x) 
    = \lim_{N\to \infty} (f_N*\phi)(x) 
    = \lim_{N\to \infty} f_N(x) 
    = f(x).
  \end{equation*}
  The continuity of the mapping follows from the calculations above.
\end{proof}

\begin{theorem}\label{thm:5}
  Assume that the convolutions
  $f\mapsto |f|*|L^\alpha\phi|$ are continuous $B \to B$ for 
  $|\alpha|\leq \mathrm{dim}(G)$.
  We can choose $\epsilon$ 
  and $U_\epsilon$-relatively separated
  points $\{ x_i\}$, such that for any
  $U_\epsilon$-BUPU $\{\psi_i \}$ with 
  $\mathrm{supp}(\psi_i) \subseteq x_iU_\epsilon$
  the operator (we let $\lambda_i(f) = \int f(x) \psi_i(x)\, dx$)
  \begin{equation*}
    T_2f = \sum_i \lambda_i(f) \ell_{x_i}\phi
  \end{equation*}
  is invertible on $B_\phi$.
  The convergence of the sum is pointwise, and
  if $C_c(G)$ is dense in $B$ then
  the convergence is also in norm.
  Further $\{ \lambda_i(T^{-1}_2 f),\ell_{x_i}\phi\}$ is an atomic decomposition
  for $B_\phi$.
\end{theorem}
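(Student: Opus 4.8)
The plan is to show that $T_2$ is a small perturbation of the identity on $B_\phi$, invert it by a Neumann series, and read off the atomic decomposition from $f=T_2(T_2^{-1}f)$. This parallels the proof of Theorem~\ref{thm:8}, but with the roles of left and right interchanged: since the synthesis building blocks are the left translates $\ell_{x_i}\phi$, the relevant oscillation is the left oscillation $M_l^\epsilon\phi$ and the relevant hypothesis is the continuity of $f\mapsto|f|*|L^\alpha\phi|$. First I would record that $T_2$ maps $B_\phi$ boundedly into itself. The coefficient functionals satisfy $|\lambda_i(f)|\le\int|f|\psi_i$, and boundedness of the analysis map $f\mapsto\{\lambda_i(f)\}$ from $B_\phi$ into $B^\#$ follows exactly as the membership $\{f(x_i)\}\in B^\#$ was obtained in Theorem~\ref{thm:9} (dominate the local average by $M_r^\epsilon f+|f|$ and use $\sum_i 1_{x_iU_\epsilon}\le N$), or from the BUPU estimates of \cite{Feichtinger1989a}. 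Given $\{\lambda_i(f)\}\in B^\#$, Lemma~\ref{lem:3} guarantees that the pointwise sum $T_2f=\sum_i\lambda_i(f)\ell_{x_i}\phi$ lies in $B_\phi$, is reproduced by $\phi$, and depends continuously on $\{\lambda_i(f)\}$.

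Next comes the central estimate $\|f-T_2f\|_{B_\phi}\le C_\epsilon\|f\|_{B_\phi}$ with $C_\epsilon\to0$. Using $\sum_i\psi_i=1$ in the reproducing formula $f(x)=\int f(y)\phi(y^{-1}x)\,dy$, I would write $f(x)-T_2f(x)=\sum_i\int f(y)\psi_i(y)[\phi(y^{-1}x)-\phi(x_i^{-1}x)]\,dy$. For $y\in\supp\psi_i\subseteq x_iU_\epsilon$ we have $x_i=yu^{-1}$ with $u\in U_\epsilon$, so $\phi(x_i^{-1}x)=\phi(uy^{-1}x)$ and hence $|\phi(y^{-1}x)-\phi(x_i^{-1}x)|\le M_l^\epsilon\phi(y^{-1}x)$. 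Summing and using $\sum_i\psi_i=1$ again gives the clean pointwise domination $|f-T_2f|\le|f|*M_l^\epsilon\phi$, so by solidity $\|f-T_2f\|_B\le\||f|*M_l^\epsilon\phi\|_B$.

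The hard part will be bounding $\||f|*M_l^\epsilon\phi\|_B$ by $C_\epsilon\|f\|_B$ with the correct $\epsilon$-decay. I would insert the left version of Lemma~\ref{lem:5}, which dominates $M_l^\epsilon\phi(w)$ by a sum over $\alpha,\delta$ (with $1\le|\alpha|=|\delta|\le n$) of iterated integrals of $|L^\alpha\phi(\tau_\delta(t)w)|$. Writing $L^\alpha\phi(\tau_\delta(t)y^{-1}x)=\ell_{\tau_\delta(t)^{-1}}L^\alpha\phi(y^{-1}x)$ turns each term into $|f|*|\ell_{\tau_\delta(t)^{-1}}L^\alpha\phi|$, and Minkowski's inequality moves the norm inside the $t$-integrals. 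The subtle point is that the convolution hypothesis is stated for $L^\alpha\phi$, not for its left translates; here I would use a change of variables yielding the identity $|f|*|\ell_z g|=\Delta(z)^{-1}(r_{z^{-1}}|f|)*|g|$, together with the right-invariance bound \eqref{eq:3} for $\|r_{z^{-1}}f\|_B$ and the continuity of $h\mapsto|h|*|L^\alpha\phi|$, uniformly for $z=\tau_\delta(t)^{-1}$ ranging over the compact set $U_\epsilon^{-1}$. This yields $\||f|*|\ell_z L^\alpha\phi|\|_B\le C'\|f\|_B$ with $C'$ bounded as $\epsilon\to0$, while each chain of $t$-integrations contributes a factor $(2\epsilon)^{|\delta|}$. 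Since every surviving term has $|\delta|\ge1$, the total constant tends to $0$ as $\epsilon\to0$.

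Finally I would choose $\epsilon$ so small that $C_\epsilon<1$ and that a $U_\epsilon$-relatively separated set $\{x_i\}$ exists; then $I-T_2$ has operator norm $<1$ on $B_\phi$ and $T_2^{-1}=\sum_{k\ge0}(I-T_2)^k$ is bounded. Taking $\lambda_i(T_2^{-1}f)$ as coefficients and $\ell_{x_i}\phi$ as atoms, property (c) of Definition~\ref{def:1} is immediate from $f=T_2T_2^{-1}f$, property (a) follows from boundedness of the analysis map applied to $T_2^{-1}f\in B_\phi$, and property (b) combines that bound with the lower estimate supplied by Lemma~\ref{lem:3}. Pointwise convergence of the defining sum is already part of Lemma~\ref{lem:3}, and norm convergence when $C_c(G)$ is dense I would quote from \cite{Rauhut2005} exactly as in Theorem~\ref{thm:8}.
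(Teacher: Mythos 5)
Your proposal follows essentially the same route as the paper: the pointwise domination $|f-T_2f|\le |f|*M_l^\epsilon\phi$, the reduction via (the left version of) Lemma~\ref{lem:5} and the hypothesis on $|f|*|L^\alpha\phi|$ to get $\|f-T_2f\|_B\le C_\epsilon\|f\|_B$, Neumann-series inversion, and Lemma~\ref{lem:3} for synthesis and the lower norm bound; your explicit treatment of the left translates via $|f|*|\ell_z g|=\Delta(z)^{-1}(r_{z^{-1}}|f|)*|g|$ usefully fills in a step the paper leaves implicit. One caveat: your first suggested route for bounding the analysis map (dominating by $M_r^\epsilon f+|f|$ as in Theorem~\ref{thm:9}) is not available here, since Theorem~\ref{thm:5} assumes nothing about right derivatives of $\phi$ and hence gives no control of $M_r^\epsilon f$; the correct argument is your fallback, which is what the paper does --- bound $|\lambda_i(f)|\le\int|f|\,1_{x_iU}$ and use $1_{x_iU}(x)1_{x_iU}(y)\le 1_{U^{-1}U}(x^{-1}y)$ to get $\sum_i|\lambda_i(f)|1_{x_iU}\le N\,|f|*1_{U^{-1}U}$, no oscillation needed.
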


\begin{proof}
  For $f\in B_\phi$ we have the following estimate
  \begin{align*}
    \sum_i |\lambda_i(f)| 1_{x_iU}(y)
    &\leq \sum_i |\lambda_i| 1_{x_iU}(y) \\
    &\leq \sum_i \left| \int f(x)\psi_i(x)\, dx \right| 1_{x_iU}(y) \\
    &\leq \sum_i \int |f(x)| 1_{x_iU}(x)\, dx 1_{x_iU}(y) \\
    &= \sum_i \int |f(x)| 1_{x_iU}(x)\, dx 1_{x_iU}(y)
  \end{align*}
  For each $y$ only $N$ of the neighbourhoods $x_iU$ overlap, and also
  $1_{x_iU}(x)1_{x_iU}(y) \leq 1_{U^{-1}U}(x^{-1}y)$ so
  we get
  \begin{align*}
    \sum_i |\lambda_i| 1_{x_iU}(y)
    \leq N \int |f(x)|1_{U^{-1}U}(x^{-1}y)\, dx
    = N |f|*1_{U^{-1}U}(y)
  \end{align*}
  The function $|f|*1_{U^{-1}U}$ is in $B$ with norm
  bounded by $C\| f\|_B$ for some constant $C$ 
  (in the sense of Bochner integrals).
  Therefore the sequence $\lambda_i(f)$ is in $B^\#$ with
  norm estimated by
  \begin{equation*}
    \| \{ \lambda_i(f)  \} \|_{B^\#}
    \leq CN \| f \|_{B}
  \end{equation*}

  By Lemma~\ref{lem:3} we see that $T_2f\in B_\phi$ and 
  \begin{align*}
    |f(x) - T_2f(x)|
    &= \left| 
      f(x) - \sum_i \int f(y)\psi_i(y)\,dy \phi(x_i^{-1}x)
    \right| \\
    &=  \left| 
      \int f(y)\phi(y^{-1}x)\,dy 
      - \sum_i \int f(y)\psi_i(y)\,dy \phi(x_i^{-1}x)
    \right|  \\
    &=  \left| 
      \int \sum_i \psi_i(y) f(y)\phi(y^{-1}x)\,dy 
      - \sum_i \int \psi_i(y) f(y)\psi_i(y)\,dy \phi(x_i^{-1}x)
    \right| \\
    &\leq  
    \sum_i \left| \int \psi_i(y) f(y)\phi(y^{-1}x)\,dy 
      - \int \psi_i(y) f(y)\psi_i(y)\,dy \phi(x_i^{-1}x)
    \right| \\
    &=
    \sum_i \left| \int \psi_i(y) f(y)(\phi(y^{-1}x) -
      \phi(x_i^{-1}x))\,dy
    \right| \\
    &\leq
    \sum_i \int \psi_i(y) |f(y)| M_l^\epsilon \phi(y^{-1}x) \,dy  \\
    &=\int |f(y)| M_l^\epsilon \phi(y^{-1}x) \,dy  \\
    &= |f|*M_l^\epsilon \phi(x)
  \end{align*}
  The continuity of the mappings
  \begin{equation*}
    B\ni f\mapsto f*|L^{\alpha}\phi| \in B
  \end{equation*}
  and Lemma~\ref{lem:5}
  ensure that $T_2 f\in B$ and $T_2$ is well-defined. 

  We will now show that $\{ \ell_{x_i}\phi\}$
  yields an atomic decomposition.

  Since $T_2^{-1}f\in B_\phi$ we therefore have
  \begin{equation*}
    \| \{ \lambda_i(T_2^{-1}f)  \} \|_{B^\#}
    \leq C \| T_2^{-1}f \|_{B}
    \leq C \| f \|_{B}
  \end{equation*}
  Further the reconstruction formula
  $f= \sum_i  \lambda(T_2^{-1}f) \ell_{x_i}\phi$ Lemma~\ref{lem:3}
  tells us that
  \begin{equation*}
    \| f\|_{B} \leq C     \| \{ \lambda_i(T_2^{-1}f)  \} \|_{B^\#}
  \end{equation*}
  thus showing that we have an atomic decomposition.
\end{proof}

\begin{theorem}\label{thm:6}
  Assume the convolutions $f\mapsto f*|L^\alpha\phi|$ and 
  $f\mapsto f*|R^\alpha\phi|$ are continuous $B \to B$ for   
  $|\alpha|\leq \mathrm{dim}(G)$.
  Then we can choose $\epsilon$ 
  and $U_\epsilon$-relatively separated
  points $\{ x_i\}$ such that for any
  $U_\epsilon$-BUPU $\{\psi_i \}$ with 
  $\mathrm{supp}(\psi_i) \subseteq x_iU_\epsilon$
  the operator given by (we let $c_i = \int \psi_i(x)\,dx$)
  \begin{equation*}
    T_3f = \sum_i c_i f(x_i)\ell_{x_i}\phi
  \end{equation*}
  is invertible.
  The convergence of the sum is pointwise, and
  if $C_c(G)$ is dense in $B$ then
  the convergence is also in norm.
  Further $\{ c_iT_3^{-1}f(x_i),\ell_{x_i}\phi\}$ 
  is an atomic decomposition for $B_\phi$ and $\{c_i\ell_{x_i}\phi \}$ is
  a frame.
\end{theorem}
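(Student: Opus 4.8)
The plan is to mirror the argument for Theorem~\ref{thm:5}: show that $T_3$ differs from the identity on $B_\phi$ by an operator of norm $C_\epsilon\to 0$, so that $T_3$ is invertible by a Neumann series once $\epsilon$ is small, and then read off the atomic decomposition and the frame from the two bounded maps (analysis $f\mapsto\{c_if(x_i)\}$ and synthesis $\{d_i\}\mapsto\sum_i d_i\ell_{x_i}\phi$) whose composition is $T_3$.

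First I would record the two boundedness facts. Since the hypotheses include continuity of $f\mapsto f*|R^\alpha\phi|$, replacing $f$ by $|f|$ and using solidity shows that the hypotheses of Theorem~\ref{thm:4} hold, so $\| M_r^\epsilon f\|_B\leq C_\epsilon\| f\|_B$ with $C_\epsilon\to 0$; Theorem~\ref{thm:9} then gives that $\| \{ f(x_i)\}\|_{B^\#}$ and $\| f\|_B$ are equivalent, and since $0\leq c_i\leq \mu(U_\epsilon)$ the analysis map $f\mapsto\{c_if(x_i)\}$ is bounded $B_\phi\to B^\#$. The continuity of $f\mapsto f*|L^\alpha\phi|$ together with Lemma~\ref{lem:3} gives that the synthesis map $\{d_i\}\mapsto\sum_i d_i\ell_{x_i}\phi$ is bounded $B^\#\to B_\phi$, so $T_3$ is a well-defined bounded operator on $B_\phi$.

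The heart of the proof is the estimate $\| f-T_3f\|_B\leq C_\epsilon\| f\|_B$. Using $\sum_i\psi_i=1$ and the reproducing formula I would write $f(x)-T_3f(x)=\sum_i\int\psi_i(y)\big[f(y)\phi(y^{-1}x)-f(x_i)\phi(x_i^{-1}x)\big]\,dy$ and split the bracket as $[f(y)-f(x_i)]\phi(y^{-1}x)+f(x_i)[\phi(y^{-1}x)-\phi(x_i^{-1}x)]$. For $y\in \mathrm{supp}(\psi_i)\subseteq x_iU_\epsilon$ one has $|f(y)-f(x_i)|\leq M_r^\epsilon f(y)$ and $|\phi(y^{-1}x)-\phi(x_i^{-1}x)|\leq M_l^\epsilon\phi(y^{-1}x)$, the same two oscillation bounds used in Theorem~\ref{thm:5}; after bounding $|f(x_i)|\leq |f(y)|+M_r^\epsilon f(y)$ the partition of unity collapses the $i$-sum into convolutions, giving the pointwise estimate $|f-T_3f|\leq (M_r^\epsilon f)*|\phi|+(|f|+M_r^\epsilon f)*M_l^\epsilon\phi$. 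Taking $B$-norms, the first term is controlled by the continuity of $g\mapsto g*|\phi|$ together with $\| M_r^\epsilon f\|_B\leq C_\epsilon\| f\|_B$, while the two terms involving $M_l^\epsilon\phi$ are treated exactly as in Theorem~\ref{thm:5}: Lemma~\ref{lem:5} dominates $M_l^\epsilon\phi$ by integrals of $|L^\alpha\phi|$ carrying factors $(2\epsilon)^{|\delta|}$ with $|\delta|\geq 1$, and the continuity of $f\mapsto f*|L^\alpha\phi|$ then produces a constant tending to $0$ with $\epsilon$. This is the step I expect to be the main obstacle, since it is precisely where both the left and the right convolution hypotheses are genuinely needed and where one must check that $M_r^\epsilon f$ really lies in $B$ so that the convolution estimates apply.

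Finally, choosing $\epsilon$ so small that $C_\epsilon<1$, the operator $T_3=I-(I-T_3)$ is invertible on $B_\phi$ by its Neumann series, and the defining sum converges pointwise (and in norm when $C_c(G)$ is dense) by the dominated-convergence argument of Theorem~\ref{thm:8}. The atomic decomposition is then immediate: from $f=T_3T_3^{-1}f=\sum_i c_i(T_3^{-1}f)(x_i)\,\ell_{x_i}\phi$ I read off the reconstruction, the analysis bound applied to $T_3^{-1}f$ gives $\| \{ c_i(T_3^{-1}f)(x_i)\}\|_{B^\#}\leq C\| f\|_B$, and the synthesis bound of Lemma~\ref{lem:3} supplies the reverse inequality, so $\{(c_i(T_3^{-1}f)(x_i)),\ell_{x_i}\phi\}$ satisfies Definition~\ref{def:1}. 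For the frame statement I would take the sampling functionals $\lambda_i(f)=c_if(x_i)$ and the reconstruction operator $S(\{d_i\})=T_3^{-1}\big(\sum_i d_i\ell_{x_i}\phi\big)$, which is bounded $B^\#\to B_\phi$ and satisfies $S(\{\lambda_i(f)\})=T_3^{-1}T_3f=f$; the boundedness of $S$ also yields the lower norm bound $\| f\|_B\leq \| S\|\,\| \{\lambda_i(f)\}\|_{B^\#}$, verifying Definition~\ref{def:3}.
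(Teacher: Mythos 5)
Your proposal is correct and follows essentially the same route as the paper: Theorem~\ref{thm:4}/Theorem~\ref{thm:9} for the analysis bound, Lemma~\ref{lem:3} for synthesis, the pointwise estimate $|f-T_3f|\leq (M_r^\epsilon f)*|\phi|+(|f|+M_r^\epsilon f)*M_l^\epsilon\phi$ followed by Lemma~\ref{lem:5} and a Neumann series, and the same reading-off of the atomic decomposition and frame. The only (cosmetic) difference is that you split $f-T_3f$ in a single step rather than routing through the operator $T_2$ of Theorem~\ref{thm:5}, which yields exactly the same three convolution terms; your explicit construction of the reconstruction operator $S(\{d_i\})=T_3^{-1}\bigl(\sum_i d_i\ell_{x_i}\phi\bigr)$ also fills in the frame claim that the paper leaves implicit.
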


\begin{proof}
  Since 
  $$
  c_i 
  = \int \psi_i(x)\, dx
  \leq \int 1_{x_iU}(x) \,dx
  = \mu(U)
  $$
  we have
  \begin{align*}
    \left| \sum_i c_i f(x_i)1_{x_iU}(x)\right|
    &\leq  
    \mu(U) \sum_i [|f(x_i)-f(x)| + |f(x)|]1_{x_iU}(x) \\
    &\leq \mu(U) \sum_i [M_r^\epsilon f(x) + |f(x)|]1_{x_iU}(x) \\
    &\leq \mu(U) N [M_r^\epsilon f(x) + |f(x)|]
  \end{align*}
  Therefore Theorem~\ref{thm:4} gives
  \begin{equation*}
    \| \{ c_if(x_i) \} \|_{B^\#} \leq 
    \mu(U)N (C_\epsilon +1) \| f\|_B
  \end{equation*}
  Thus $T_3f\in B_\phi$ by Lemma~\ref{lem:3} and
  \begin{align*}
    |f(x) - T_3f(x)|
    &\leq 
    \left| f(x) - \sum_i \left( \int f(y)\psi_i(y) \,dy\right)
      \ell_{x_i}\phi(x) \right| \\
    &\qquad+ \left| \sum_i \left( \int f(y)\psi_i(y) \,dy\right)
      \ell_{x_i}\phi(x) - \sum_i c_i f(x_i)\ell_{x_i}\phi(x) \right|
  \end{align*}
  The first expression was estimated in the previous theorem, so we
  concentrate on
  \begin{align*}
    \left| \sum_i \left( \int f(y)\psi_i(y) \,dy\right)
      \ell_{x_i}\phi\right. &- \left.\sum_i c_i f(x_i)\ell_{x_i}\phi \right| \\
    &\leq
    \sum_i \left( \int |f(y)-f(x_i)||\phi(x_i^{-1}x)|\psi_i(y)
      \,dy\right) \\
    &\leq
    \sum_i \int |f(y)-f(x_i)||\phi(y^{-1}x)| \psi_i(y)
      \,dy \\
    &\qquad + \sum_i \int |f(y)-f(x_i)||\phi(y^{-1}x)-\phi(x_i^{-1}x)|
      \psi_i(y) \,dy \\
    &\leq
    \sum_i \left( \int Mf(y)|\phi(y^{-1}x)| \psi_i(y) \,dy\right) \\
    &\qquad+ \sum_i \int Mf(y)M_r^\epsilon \phi(y^{-1}x)
      \psi_i(y) \,dy \\
    &= M_l^\epsilon f*|\phi|(x) + M_l^\epsilon f*M_r^\epsilon\phi(x)
  \end{align*}
  Now, by our assumptions the functions in the last expression
  are all in $B$,
  and their norms are dominated by the norms of convolution with of
  $f$ with $|L^\alpha \phi|$ and $|R^\alpha\phi|$ for $|\alpha|\leq
  \mathrm{Dim}(G)$.
  Therefore 
  \begin{equation*}
    \| f - T_3f \|_B \leq C_\epsilon \| f\|_B
  \end{equation*}
  where $C_\epsilon\to 0$ as $\epsilon \to 0$.

  Since $T_3^{-1}f \in B_\phi$ we thus have
  \begin{equation*}
    \| \{ c_iT_3^{-1}f(x_i) \} \|_{B^\#} \leq 
    C \| T_3^{-1}f\|_B
    \leq C \| f\|_B
  \end{equation*}
  Any $f\in B_\phi$ can be written
  \begin{equation*}
    f = \sum_i c_i T_3^{-1}f(x_i)\ell_{x_i}\phi
  \end{equation*}
  and thus by Lemma~\ref{lem:3} the norm of $f$ satisfies
  \begin{equation*}
    \| f\|_B \leq C \| \{ c_iT_3^{-1}f(x_i)\}\|_{B^\#}
  \end{equation*}
  which finishes the proof that $\{
  c_iT_3^{-1}f(x_i),\ell_{x_i}\phi\}$ 
  forms an 
  atomic decomposition of $B_\phi$.
\end{proof}

\section{Coorbit Spaces on Lie groups}
\noindent
In this section we apply the sampling theorems from section~\ref{sec:srkbs}
to a certain class of reproducing spaces. These spaces are
images of Banach spaces of distributions (so-called coorbit spaces)
under a wavelet transform. Thus we yield sampling theorems for
a large class of Banach spaces including modulation spaces, Besov
spaces, Bergman spaces and Hilbert spaces of band-limited functions.
Similar sampling theorems are known for spaces related to irreducible
and integrable representations 
\cite{Feichtinger1989a,Feichtinger1989b,Grochenig1991}.
We replace the integrability condition with smoothness arguments
which also apply to non-integrable and non-irreducible cases.

\subsection{Construction of coorbit spaces}

Let $S$ be a Fr\'echet space and let $S^\cdual$ be the 
conjugate linear dual equipped with the weak$^*$ topology. 
We assume that $S$ is
continuously imbedded and 
weakly dense in $S^\cdual$. The conjugate
dual pairing of elements $v\in S$ and $v'\in S^\cdual$ will be denoted
by $\dup{v'}{v}$. 
As usual define the contragradient
representation $(\pi^\cdual,S^\cdual)$ by
\begin{equation*}
  \dup{\pi^\cdual(x)v'}{v}=\dup{v'}{\pi(x^{-1})v}.
\end{equation*}
Then $\pi^*$ is a continuous representation of $G$ on $S^\cdual$. 
For a fixed vector 
$u\in S$ define the linear map $W_u:S^*\to C(G)$ by
\begin{equation*}
  W_u(v')(x) = \dup{v'}{\pi(x)u}.
\end{equation*}
The map $W_u$ is called \emph{the voice transform} or 
\emph{the wavelet transform}.

In \cite{Christensen2010} we listed minimal conditions ensuring that
spaces of the form
\begin{equation*}
  \Co_S^u B = \{ v' \in S^\cdual | W_u(v') \in B \}
\end{equation*}
equipped with the norm $\| v' \| = \| W_u(v')\|_B$ are
$\pi^*$ invariant Banach spaces.
The space $\Co_S^u B$ is called the coorbit space of $B$ related
to $u$ and $S$.
\begin{assumption}\label{as:1}
  Assume there is a non-zero cyclic vector $u\in S$ 
  satisfying the following properties
  \begin{enumerate}
    \renewcommand{\labelenumi}{(R\arabic{enumi})}
    \renewcommand{\theenumi}{(R\arabic{enumi})}
  \item the reproducing formula $W_{u}(v)*W_{u}(u)=W_{u}(v)$ is true
    for all $v\in S$ \label{r1}
  \item 
    the mapping
    $Y\ni F\mapsto \int_G F(x)W_u(u)(x^{-1})\,dx \in \mathbb{C}$
    is continuous \label{r2}
  \item if $F=F*W_u(u)\in Y$ then the mapping
    $S \ni v \mapsto \int F(x) \dup{\pi^\cdual(x)u}{v} \,dx
    \in\mathbb{C}$ 
    is in $S^\cdual$\label{r3}
  \item the mapping 
    $S^\cdual\ni \phi \mapsto \int \dup{\phi}{\pi(x)u} 
    \dup{\pi^\cdual(x)u}{u}  \,dx \in\mathbb{C}$ 
    is weakly continuous \label{r4}
  \end{enumerate}
\end{assumption}
A vector $u$ satisfying Assumption~\ref{as:1} 
is called an \emph{analyzing
vector}. 

The subspace $B_u$ of $B$ defined by 
$$B_u = \{ F\in B | F=F*W_u(u) \},$$
is a reproducing kernel Banach space.
By \cite{Christensen2010} it follows that
$\Co_S^u B$ is $W_u:\Co_S^u B \to B_u$ is an isometric isomorphism
intertwining $\pi^*$ and left translation.

\subsection{Sampling of wavelet transform}

We now list conditions ensuring that we can obtain the frame
inequality from Theorem~\ref{thm:9}. 
A vector $u\in S$ is called weakly differentiable in the direction
$X\in\mathfrak{g}$ if there is a vector denoted $\pi(X)u \in S$
such that for all $v'\in S^*$ 
\begin{equation*}
  \dup{v'}{\pi(X)u}=
  \frac{d}{dt}\Big|_{t=0}\dup{v'}{\pi(e^{tX})u}
\end{equation*}
For the differential operators $R^\alpha$ we write
$\pi(R^\alpha)u$ for a vector which satisfies
\begin{equation*}
  \dup{v'}{\pi(R^\alpha)u}=
  \dup{v'}{\pi(X_{\alpha(k)}) \pi(X_{\alpha(k-1)}) \cdots \pi(X_{\alpha(1)})u}
\end{equation*}
We use the notation $\pi(R^\alpha)$ for the differential
operators on $S$ because they match the right differential operators 
$R^\alpha$ on $B$: 
if $f(x) = W_u(v)(x)$, then $R^\alpha f(x) = W_{\pi(R^\alpha)u}(v)(x)$.
\begin{assumption}\label{as:2}
  Assume there is a non-zero cyclic vector $u\in S$ 
  satisfying Assumption~\ref{as:1}. Further assume
  that $u$ is weakly differentiable up to order
  $\mathrm{dim}(G)$ and that
  \begin{enumerate}
    \renewcommand{\labelenumi}{(D\arabic{enumi})}
    \renewcommand{\theenumi}{(D\arabic{enumi})}
  \item there are non-zero constants $c_\alpha$
    such that $W_{u}(v)*W_{\pi(R^\alpha)u}(u)=c_\alpha W_{\pi(R^\alpha)u}(v)$
    for all $v\in S$ \label{d1}
  \item the mapping 
    $S^\cdual\ni \phi \mapsto \int \dup{\phi}{\pi(x)u} 
    \dup{\pi^\cdual(x)u}{\pi(R^\alpha)u}  \,dx \in\mathbb{C}$ 
    is weakly continuous \label{d2}
  \item the mappings
    $B_u\ni F\mapsto F*W_{\pi(R^\alpha)u}(u)\in B$
    are continuous for all $|\alpha|\leq \mathrm{dim}(G)$ \label{d3}
  \end{enumerate}
\end{assumption}

\begin{remark}
  Notice that for $\alpha=0$ the properties \ref{d1} 
  and \ref{d2} correspond to 
  \ref{r1} and \ref{r4} respectively. The condition \ref{d2} is used to 
  extend the convolution relation from \ref{d1} to all $v\in S^*$.
\end{remark}

\begin{theorem}
  \label{thm:7}
  If $u\in S$ satisfies Assumption~\ref{as:2} then
  we can choose $\epsilon$ small enough that for any
  $U_\epsilon$-relatively separated set $\{ x_i\}$ there
  are $0 < A_1\leq A_2<\infty$ such that
  \begin{equation*}
    A_1 \| v'\|_{\Co_S^u B} 
    \leq \| \{ \dup{v'}{\pi(x_i)u} \}\|_{B^\#}
    \leq A_2 \| v'\|_{\Co_S^u B} 
  \end{equation*}
  If convolution with $W_u(u)$ is continuous on $B$, then
  $\pi(x_i)u$ is a frame for $\Co_S^u B$ with reconstruction operator
  \begin{equation*}
    v' =
    W_u^{-1 }T_1^{-1} \left( \sum_i W_u(v')(x_i)\psi_i*W_u(u) \right)
  \end{equation*}
  where $\{\psi_i \}$ is any $U_\epsilon$-BUPU for which
  $\supp(\psi_i)\subseteq x_iU_\epsilon$.
\end{theorem}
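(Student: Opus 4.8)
The plan is to transport the entire problem through the isometric isomorphism $W_u\colon \Co_S^u B \to B_u$ and then feed the abstract sampling machinery of Section~\ref{sec:srkbs} into the reproducing kernel Banach space $B_u = B_\phi$ with $\phi = W_u(u)$. The reproducing formula \ref{r1} identifies $B_u$ with $B_\phi$, and by \cite{Christensen2010} the map $W_u$ is an isometric isomorphism intertwining $\pi^\cdual$ with left translation. Since $\dup{v'}{\pi(x_i)u} = W_u(v')(x_i)$ and $\|v'\|_{\Co_S^u B} = \|W_u(v')\|_B$, the claimed two-sided inequality is, for $f = W_u(v') \in B_\phi$, exactly the norm equivalence $A_1 \|f\|_B \leq \|\{f(x_i)\}\|_{B^\#} \leq A_2 \|f\|_B$ furnished by Theorem~\ref{thm:9}. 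Thus the first assertion reduces to verifying the hypothesis of that theorem, namely the oscillation bound $\|M_r^\epsilon f\|_B \leq C_\epsilon \|f\|_B$ with $C_\epsilon \to 0$, for all $f \in B_\phi$.

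To produce this oscillation estimate I would invoke Corollary~\ref{cor:1}, which asks that the elements of $B_\phi$ be smooth and that $f \mapsto R^\alpha f$ be continuous $B_\phi \to B$ for $|\alpha| \leq \mathrm{dim}(G)$. Smoothness of $f = W_u(v')$ follows from the weak differentiability of $u$ assumed in Assumption~\ref{as:2}, together with the intertwining identity $R^\alpha W_u(v') = W_{\pi(R^\alpha)u}(v')$. For the continuity of differentiation I would use \ref{d1}: writing $R^\alpha \phi = R^\alpha W_u(u) = W_{\pi(R^\alpha)u}(u)$, relation \ref{d1} reads $f * R^\alpha \phi = c_\alpha R^\alpha f$. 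Since \ref{d3} asserts that $F \mapsto F * R^\alpha \phi = F * W_{\pi(R^\alpha)u}(u)$ is continuous $B_u \to B$ and $c_\alpha \neq 0$, this exhibits $f \mapsto R^\alpha f = c_\alpha^{-1}\, f * R^\alpha \phi$ as continuous. Corollary~\ref{cor:1} then supplies the required $C_\epsilon \to 0$, and Theorem~\ref{thm:9} delivers the norm equivalence.

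For the frame and reconstruction statement, once convolution with $\phi = W_u(u)$ is continuous on $B$ I would apply Theorem~\ref{thm:8} directly to $B_\phi$: it yields, for $\epsilon$ small enough and any $U_\epsilon$-BUPU $\{\psi_i\}$ with $\supp(\psi_i) \subseteq x_iU_\epsilon$, an invertible operator $T_1 f = \sum_i f(x_i)(\psi_i * \phi)$ on $B_\phi$. Transporting by the isometry, the formula $v' = W_u^{-1} T_1^{-1}\big(\sum_i W_u(v')(x_i)\,\psi_i * W_u(u)\big)$ reconstructs $v'$ from the samples $W_u(v')(x_i) = \dup{v'}{\pi(x_i)u}$, which together with the norm equivalence is precisely the frame property claimed.

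The step I expect to be the main obstacle is the one concealed in the appeal to \ref{d1}: that convolution relation is only assumed for $v \in S$, whereas the coorbit elements are genuine distributions $v' \in S^\cdual$. Extending $f * R^\alpha \phi = c_\alpha R^\alpha f$ from $S$ to all of $B_\phi$ requires the weak continuity condition \ref{d2} together with the weak density of $S$ in $S^\cdual$, and one must confirm that the extended identity holds in $B$ and not merely pointwise. This is the delicate point flagged in the remark following Assumption~\ref{as:2}; everything else is a matter of feeding the verified hypotheses into Corollary~\ref{cor:1}, Theorem~\ref{thm:9}, and Theorem~\ref{thm:8}.
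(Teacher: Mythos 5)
Your proposal is correct and follows essentially the same route as the paper: verify the oscillation bound by extending the convolution relation \ref{d1} from $S$ to $S^\cdual$ via \ref{d2} (the paper does this explicitly with a net argument), use \ref{d3} to get $\| W_{\pi(R^\alpha)u}(v')\|_B \leq C_\alpha \| W_u(v')\|_B$, feed this into Theorem~\ref{thm:3}/Corollary~\ref{cor:1}, and then apply Theorem~\ref{thm:9} for the norm equivalence and Theorem~\ref{thm:8} for the reconstruction operator. The "delicate point" you flag is resolved exactly as you suggest, and membership of the extended identity in $B$ follows from \ref{d3} applied to the left-hand side $W_u(v')*W_{\pi(R^\alpha)u}(u)$.
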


\begin{proof}
  Let us first show that \ref{d1} and \ref{d2} ensure that
  \begin{equation*}
    W_{u}(v')*W_{\pi(R^\alpha)u}(u)=c_\alpha W_{\pi(R^\alpha)u}(v')    
  \end{equation*}
  for $v'\in S^*$.
  Let $v_\beta$ be a net in $S$ converging to $v'$. Then
  \begin{align*}
    c_\alpha W_{\pi(R^\alpha)u}(v')(x)
    &= \lim_\beta c_\alpha W_{\pi(R^\alpha)u}(v_\beta) (x) \\
    &= \lim_\beta W_{u}(v_\beta)*W_{\pi(R^\alpha)u}(u) (x) \\
    &= \lim_\beta  
    \int \dup{v_\beta}{\pi(xy)u} \dup{\pi^*(y)u}{\pi(R^\alpha)u} \,dy \\
    &= \lim_\beta  
    \int \dup{\pi^*(x^{-1})v_\beta}{\pi(y)u} \dup{\pi^*(y)u}{\pi(R^\alpha)u} \,dy \\
    &= 
    \int \dup{\pi^*(x^{-1})v'}{\pi(y)u} \dup{\pi^*(y)u}{\pi(R^\alpha)u} \,dy \\
    &= 
    \int \dup{v'}{\pi(y)u} \dup{u}{\pi(y^{-1}x)\pi(R^\alpha)u} \,dy \\
    &= W_{u}(v')*W_{\pi(R^\alpha)u}(u) (x) \\
  \end{align*}
  Therefore, if $v'\in \Co_S^u B$ we have
  \begin{equation*}
    W_{\pi(R^\alpha)u}(v') = \frac{1}{c_\alpha} W_{u}(v')*W_{\pi(R^\alpha)u}(u)
  \end{equation*}
  and the continuity requirement \ref{d3} ensures
  that $W_{\pi(R^\alpha)u}(v')\in B$ and
  \begin{equation*}
    \| W_{\pi(R^\alpha)u}(v')\|_B     \leq C_\alpha \| W_u(v')\|_B
  \end{equation*}
  By Theorem~\ref{thm:3} there is a constant
  $C_\epsilon$ such that
  \begin{equation*}
    \| M_r^\epsilon W_u(v')\|_B 
    \leq C_\epsilon \| W_u(v')\|_B
  \end{equation*}
  and $C_\epsilon \to 0$ as $\epsilon \to 0$. 
  Theorem~\ref{thm:9} shows that
  there are $A_1,A_2$ such that
    \begin{equation*}
    A_1 \| v'\|_{\Co_S^u B} 
    \leq \| \{ \dup{v'}{\pi(x_i)u} \}\|_{B^\#}
    \leq A_2 \| v'\|_{\Co_S^u B} 
  \end{equation*}
  which proves the norm equivalence.
  If convolution with $W_u(u)$ is continuous on $B$ the
  reconstruction operator can be found using Theorem~\ref{thm:8}.
\end{proof}

\begin{remark}
  For $B=L^p(G)$ the sequence space
  is $B^\# = \ell^p$ and in this case a reconstruction operator
  is automatic when the frame inequality is given (see \cite{Han2009}). 
\end{remark}

\subsection{G{\aa}rding vectors and smooth representations}

In this section we will focus on square integrable group representations
and its smooth vectors. In particular we will show that
G{\aa}rding vectors are particularly nice to work with.

A unitary irreducible representation
$(\pi,H)$ is square integrable if there is a non-zero $u\in H$
such that the function $W_u(u)(x) = \ip{u}{\pi(x)u}$ is in $L^2(G)$.
Any such vector $u$ is called admissible. Duflo and Moore \cite{Duflo1976}
proved the following
\begin{theorem}[Duflo-Moore]
  \label{thm:11}
  If $(\pi,\mathcal{H})$ is square integrable, then there is a positive
  densely defined operator $C$ with domain $D(C)$ such that
  $W_u(u)$ is in $L^2(G)$ if and only if $u\in D(C)$.
  Furthermore, if $u_1,u_2 \in D(C)$ then
  \begin{equation*}
    \int_G (v_1,\pi(x)u_1)_H(\pi(x)u_2,v_2)_H \, dx
    = (Cu_2,Cu_1)_H (v_1,v_2)_H
  \end{equation*}
\end{theorem}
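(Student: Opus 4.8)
The plan is to study, for a single fixed admissible vector, the coefficient map into $L^2(G)$ and to extract the operator $C$ from a polar decomposition together with two applications of Schur's lemma. Fix a non-zero $\eta\in H$ with $W_\eta(\eta)\in L^2(G)$ and consider the operator $V_\eta$ sending $v$ to the coefficient function $V_\eta v(x)=(v,\pi(x)\eta)_H$, with natural domain $D(V_\eta)=\{v\in H: V_\eta v\in L^2(G)\}$. First I would check that $V_\eta$ is a closed operator into $L^2(G)$: if $v_n\to v$ in $H$ and $V_\eta v_n\to F$ in $L^2(G)$, then $V_\eta v_n(x)\to (v,\pi(x)\eta)_H$ pointwise while a subsequence of $V_\eta v_n$ converges almost everywhere to $F$, forcing $F=V_\eta v$. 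A direct computation gives the intertwining relation $V_\eta\pi(x)=\ell_x V_\eta$, where $\ell$ is the left regular representation, so $D(V_\eta)$ is $\pi$-invariant; since it contains $\eta\neq 0$, its closure is a non-zero closed invariant subspace and irreducibility makes $D(V_\eta)$ dense.

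Having a closed, densely defined, equivariant $V_\eta$, I would form the positive self-adjoint operator $V_\eta^{*}V_\eta$. The intertwining relation and its adjoint show that $V_\eta^{*}V_\eta$ commutes with every $\pi(x)$, hence so do its spectral projections; these lie in the commutant of the irreducible representation $\pi$, which by Schur's lemma is $\mathbb{C}I$, so each projection is $0$ or $I$. Thus $V_\eta^{*}V_\eta=c_\eta^{2}I$ for a scalar $c_\eta>0$, which means $V_\eta$ is in fact bounded with $\|V_\eta v\|_{L^2}^2=c_\eta^2\|v\|_H^2$. In particular the admissibility set $\mathcal{A}=\{u\in H: W_u(u)\in L^2(G)\}$ coincides with $\{u: V_u\text{ is bounded}\}$; since $V_u$ depends additively on $u$ and a sum of bounded operators is bounded, $\mathcal{A}$ is a linear subspace, and the identity $V_{\pi(y)u}=r_y V_u$ shows it is $\pi$-invariant and therefore dense.

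For the orthogonality relation I would take two admissible vectors $u_1,u_2$ and consider the bounded operator $V_{u_2}^{*}V_{u_1}$ on $H$. Using $V_{u_j}\pi(x)=\ell_x V_{u_j}$ one checks that $V_{u_2}^{*}V_{u_1}$ commutes with $\pi$, so a second, now bounded, application of Schur's lemma yields $V_{u_2}^{*}V_{u_1}=\lambda(u_1,u_2)I$ for a scalar $\lambda(u_1,u_2)$. Unwinding the $L^2$-inner product $(V_{u_1}v_1,V_{u_2}v_2)_{L^2}=(V_{u_2}^{*}V_{u_1}v_1,v_2)_H$ gives exactly
\begin{equation*}
  \int_G (v_1,\pi(x)u_1)_H(\pi(x)u_2,v_2)_H\,dx=\lambda(u_1,u_2)(v_1,v_2)_H,
\end{equation*}
where $\lambda$ is sesquilinear, conjugate-linear in $u_1$ and linear in $u_2$, with non-negative diagonal $\lambda(u,u)=c_u^2\geq 0$.

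It remains to produce the operator $C$ with $\lambda(u_1,u_2)=(Cu_2,Cu_1)_H$. I would fix a unit vector $v_0$ and note that $u\mapsto L_{v_0}u$, where $L_{v_0}u(x)=(\pi(x)u,v_0)_H$, is \emph{linear} in $u$ with $\|L_{v_0}u\|_{L^2}^2=\|V_u v_0\|_{L^2}^2=\lambda(u,u)$; the same pointwise-versus-$L^2$ argument as above shows $L_{v_0}$ is closed. Taking $C$ to be the positive part $|L_{v_0}|=(L_{v_0}^{*}L_{v_0})^{1/2}$ gives a positive self-adjoint operator with $\|Cu\|_H^2=\lambda(u,u)$, and polarizing the two sesquilinear forms $(Cu_2,Cu_1)_H$ and $\lambda(u_1,u_2)$, which agree on the diagonal, yields the stated identity. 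The step I expect to be the main obstacle is the bookkeeping of domains: one must verify that the domain of the abstractly constructed $C$ is exactly the admissibility set $\mathcal{A}$, equivalently that the positive form $u\mapsto\lambda(u,u)$ is closed with form domain $\mathcal{A}$. This is precisely where non-unimodularity enters, since $L_{v_0}$ intertwines $\pi$ with \emph{right} translation (which rescales $L^2(G)$ by the modular function rather than acting unitarily), so $L_{v_0}^{*}L_{v_0}$ is genuinely non-scalar and $C$ is in general an unbounded, non-scalar operator whose formal degree $c_u$ varies with $u$.
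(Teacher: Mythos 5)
The paper does not prove this statement: it is the classical Duflo--Moore theorem, imported verbatim with a citation to \cite{Duflo1976}, so there is no in-paper argument to compare against. Judged on its own, your proof is essentially correct and follows the standard modern route (closed intertwining operators plus Schur's lemma, as in Grossmann--Morlet--Paul and F\"uhr's treatment): the closedness of $V_\eta$ via pointwise versus $L^2$ limits, the density of $D(V_\eta)$ from $\pi$-invariance and irreducibility, the scalarity of $V_\eta^{*}V_\eta$ through its spectral projections, and the identification of $\lambda(u_1,u_2)$ by unwinding $(V_{u_1}v_1,V_{u_2}v_2)_{L^2}$ are all sound. The one issue you flag as a remaining obstacle --- that $D(C)=\mathcal{A}$ --- is in fact already settled by your own intermediate steps, and you should say so explicitly: your Schur argument yields the dichotomy that $D(V_u)$ is either $\{0\}$ or all of $H$ (if $\overline{D(V_u)}=H$ then $V_u^{*}V_u=c_u^2 I$ forces $D(V_u)=H$ and $V_u$ bounded), so $u\in D(L_{v_0})\iff V_uv_0\in L^2(G)\iff D(V_u)=H\iff u\in\mathcal{A}$; combined with the general fact that $D(|T|)=D(T)$ and $\|\,|T|u\,\|=\|Tu\|$ for any closed densely defined $T$, this gives $D(C)=\mathcal{A}$ and $\|Cu\|_H^2=\lambda(u,u)$ with no further work. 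Your closing remark that $L_{v_0}$ intertwines $\pi$ with right translation, so that $C^2$ is semi-invariant of weight $\Delta^{-1}$ rather than scalar, is exactly the correct explanation of why $C$ is non-trivial in the non-unimodular case.
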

By choosing $u$ such that $\| Cu\|_H =1$ 
we automatically obtain a reproducing formula
\begin{equation*}
  W_u(v)*W_u(u) = W_u(v) 
\end{equation*}
for all $v\in H$.

A vector $v\in H$ is called smooth if the mapping
\begin{equation*}
  G\in x \mapsto \pi(x)v \in H
\end{equation*}
is smooth in the norm topology of $H$. 
The space of smooth vector is denoted $H_\pi^\infty$ and
is a Fr\'echet space when equipped with the seminorms
\begin{equation*}
  \| v\|_k = \sup_{|\alpha|\leq k} \| \pi(R^\alpha)v\|_{H}
\end{equation*}
For any $v\in H$ and any $f\in C_c^\infty(G)$
the vector $\pi(f)v$ defined by
\begin{equation*}
  \pi(f)v = \int f(x) \pi(x)v\,dx
\end{equation*}
is smooth and called a G{\aa}rding vector.

The following statement is an extension of 
a result found in \cite{Christensen2009}
without proof.
\begin{lemma}
  \label{lem:4}
  If $u\in H_\pi^\infty$ is in the domain of the operator
  $C$ from Theorem \ref{thm:11}, then the map
  \begin{equation*}
    H_\pi^{-\infty} \ni \phi \mapsto 
    \int \dup{\phi}{\pi(x)u} \dup{\pi(x)u}{v}  \,dx \in \mathbb{C}
  \end{equation*}
  is continuous in the weak topology for $v\in H_\pi^\infty$. Thus both \ref{r4} 
  and \ref{d2} are satisfied.
\end{lemma}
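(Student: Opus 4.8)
The plan is to reduce weak continuity to the statement that the functional is represented by a single fixed smooth vector. On $H_\pi^{-\infty}=(H_\pi^\infty)^\cdual$ with the weak$^*$ topology, a linear functional is weakly continuous precisely when it equals the pairing against some $w\in H_\pi^\infty$, so it suffices to produce such a $w$ with
\[
  \int_G \dup{\phi}{\pi(x)u}\dup{\pi(x)u}{v}\,dx=\dup{\phi}{w}
  \qquad\text{for all }\phi\in H_\pi^{-\infty}.
\]
Writing $\dup{\pi(x)u}{v}=\overline{W_u(v)(x)}$ and $\dup{\phi}{\pi(x)u}=W_u(\phi)(x)$, the natural candidate is the vector-valued integral
\[
  w=\int_G W_u(v)(x)\,\pi(x)u\,dx .
\]
If this integral converges in the Fr\'echet space $H_\pi^\infty$, then since every $\phi\in H_\pi^{-\infty}$ is by definition continuous on $H_\pi^\infty$, the pairing may be interchanged with the integral to give the displayed identity.

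First I would identify $w$. For an honest vector $\psi\in H$ the Duflo--Moore orthogonality relations of Theorem~\ref{thm:11} give
\[
  \dup{\psi}{w}=\int_G \overline{W_u(v)(x)}\,W_u(\psi)(x)\,dx
  =\| Cu\|_H^{2}\,\dup{\psi}{v},
\]
so that $w=\| Cu\|_H^{2}\,v$. In particular the candidate is already a smooth vector, because $v\in H_\pi^\infty$; this is exactly the consistency one needs, and it shows that the real content of the proof is to verify that the integral defining $w$ genuinely converges in the topology of $H_\pi^\infty$. Once that is known, its limit must agree with the weak limit $\| Cu\|_H^{2}\,v$ already computed in $H$.

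The technical heart is therefore the convergence in $H_\pi^\infty$, i.e. the estimate $\int_G |W_u(v)(x)|\,\| \pi(x)u\|_k\,dx<\infty$ for every seminorm $\| \cdot\|_k$. Here I would use $\pi(X)\pi(x)u=\pi(x)\pi(\mathrm{Ad}_{x^{-1}}X)u$, so that $\pi(R^\alpha)\pi(x)u$ is $\pi(x)$ applied to a differential operator of order $|\alpha|$ whose coefficients are matrix entries of $\mathrm{Ad}_{x^{-1}}$; unitarity of $\pi(x)$ then bounds $\| \pi(x)u\|_k$ by $C_k\,q_k(x)\max_{|\beta|\le k}\| \pi(R^\beta)u\|_H$, where $q_k$ grows at most polynomially (for groups of polynomial growth) or exponentially. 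Against this growth I would play the rapid decay of the matrix coefficient $x\mapsto W_u(v)(x)$, available because $u,v$ are smooth and $u$ is admissible: all right derivatives $R^\alpha W_u(v)=W_{\pi(R^\alpha)u}(v)$ are square integrable by Theorem~\ref{thm:11}, and in the concrete cases of interest the coefficients are in fact integrable (compare the $(ax+b)$-example, where $W_{\pi(R^\alpha)u}(u)\in L^1(G)$). Reconciling this decay with the adjoint-action growth of $\| \pi(x)u\|_k$ is the step I expect to be the main obstacle, since it is precisely where smoothness and admissibility of $u$ are consumed, and for groups of exponential growth it will require a careful Sobolev-type argument rather than a soft estimate.

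Finally, granting convergence of the integral in $H_\pi^\infty$, I would conclude
\[
  \int_G\dup{\phi}{\pi(x)u}\dup{\pi(x)u}{v}\,dx=\dup{\phi}{w}=\| Cu\|_H^{2}\,\dup{\phi}{v}
\]
for every $\phi\in H_\pi^{-\infty}$, and the right-hand side is manifestly weakly continuous in $\phi$. Specializing $v=u$ yields \ref{r4}, while $v=\pi(R^\alpha)u\in H_\pi^\infty$ yields \ref{d2}, which completes the lemma.
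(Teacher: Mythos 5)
Your identification of the representing vector is exactly the paper's argument: the paper applies the Duflo--Moore relations of Theorem~\ref{thm:11} to see that for every $w\in H$ one has $\int\ip{w}{\pi(x)u}\ip{\pi(x)u}{v}\,dx = C_u\ip{w}{v}$, concludes that the weakly defined vector $\pi(W_u(v)^\vee)u=\int\ip{\pi(x)u}{v}\,\pi(x)u\,dx$ equals $C_u v$, and then simply observes that $C_u v\in H_\pi^\infty$ because $v$ is smooth, so that pairing against it is weak$^*$ continuous on $H_\pi^{-\infty}$. Your first two paragraphs and your final paragraph reproduce this, and the specializations $v=u$ and $v=\pi(R^\alpha)u$ giving (R4) and (D2) are also as in the paper.

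Where you diverge is the third paragraph. The paper never attempts to show that the vector-valued integral converges in the Fr\'echet topology of $H_\pi^\infty$; it treats the integral purely weakly (the vector is pinned down by its inner products against $H$, which is what Duflo--Moore computes) and lets the smoothness of the already-identified limit $C_u v$ do all the work. You are right that if one insists that the scalar integral $\int\dup{\phi}{\pi(x)u}\dup{\pi(x)u}{v}\,dx$ converge absolutely for an arbitrary distribution $\phi\in H_\pi^{-\infty}$ --- rather than being read, by definition, as the pairing $\dup{\phi}{C_u v}$ --- then one needs precisely the estimate $\int|W_u(v)(x)|\,\|\pi(x)u\|_k\,dx<\infty$ that you flag, and your sketch (adjoint-action growth of $\|\pi(x)u\|_k$ played against decay of the smooth matrix coefficient) has the right shape but is not carried out; as you yourself note, it is group-dependent and not a soft estimate. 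So: under the paper's weak reading of the integral, your first two paragraphs plus the last already constitute the entire proof and coincide with the paper's; under your stronger reading, the step you label the ``main obstacle'' is a genuine gap in your write-up --- though it is one the paper sidesteps rather than fills.
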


\begin{proof}
  For vectors $v$ in $H^\infty_\pi$ and $w\in H$ 
  the dual pairing $\dup{w}{v}$ is the inner product $\ip{w}{v}$ on $H$.
  For $v\in H_\pi^\infty$ we have
  \begin{equation*}
    H\in w\mapsto \int \ip{w}{\pi(x)u} \ip{\pi(x)u}{v}  \,dx
    = C_u\ip{w}{v}
  \end{equation*}
  and therefore the weakly defined vector
  \begin{equation*}
    \pi(W_u(v)^\vee)u 
    = \int  \ip{\pi(x)u}{v} \pi(x)u \,dx
= C_u v \in H_\pi^\infty
  \end{equation*}
  This proves the statement of the lemma.
\end{proof}

\begin{theorem}
  \label{thm:2}
  Let $(\pi,H)$ be a square integrable representation
  with smooth vectors $S=H_\pi^\infty$.
  Let $B$ be a left and right invariant 
  Banach function space and
  let $u\in S$ be 
  such that Assumption~\ref{as:1} is satisfied and
  further the mapping
  \begin{equation*}
    B\ni F \mapsto F*W_u(u)\in B
  \end{equation*}
  is continuous. Then 
  $\Co_{S}^u = \Co_{S}^{\widetilde{u}}$ for
  any (properly normalized) G{\aa}rding vector $\widetilde{u}$ and
  the vectors
  $\pi(x_i)\widetilde{u}$ form a Banach frame for both $\Co_{S}^{\widetilde{u}}$
  and $\Co_{S}^u$. Further $\pi(x_i)\widetilde{u}$ provide atomic decompositions
  for $\Co_{S}^u$ through Theorem~\ref{thm:5} and Theorem~\ref{thm:6}.
\end{theorem}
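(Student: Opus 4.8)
The plan is to verify that any properly normalized G\aa rding vector $\widetilde u=\pi(g)u$, with $g\in C_c^\infty(G)$, itself satisfies Assumption~\ref{as:2}, and then to read off the frame and atomic decompositions directly from Theorems~\ref{thm:7},~\ref{thm:5} and~\ref{thm:6}; the equality of the two coorbit spaces is handled separately at the end. First I would normalize $\widetilde u$ so that $\|C\widetilde u\|_H=1$. Since $\widetilde u$ is smooth and lies in $D(C)$, the Duflo--Moore orthogonality relations (Theorem~\ref{thm:11}) give the reproducing formula \ref{r1}, while Lemma~\ref{lem:4} immediately supplies the weak continuity conditions \ref{r4} and \ref{d2}. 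The remaining parts \ref{r2} and \ref{r3} of Assumption~\ref{as:1} follow once the reproducing kernel has been identified as a well-behaved convolution kernel, which is the content of the next step.

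The key structural observation is that differentiation preserves the G\aa rding property: $\pi(R^\alpha)\widetilde u=\pi(R^\alpha)\pi(g)u=\pi(g_\alpha)u$ for suitable $g_\alpha\in C_c^\infty(G)$ obtained by differentiating $g$. Consequently every kernel that appears is a cross--wavelet coefficient of two G\aa rding vectors, and a direct computation gives the two--sided convolution identity
\[
  W_{\pi(g_1)u}(\pi(g_2)u)(x)=\int\!\!\int g_2(y)\,\overline{g_1(z)}\,W_u(u)(y^{-1}xz)\,dy\,dz,
\]
so that this cross kernel equals $h_2*W_u(u)*h_1$ for compactly supported smooth functions $h_1,h_2$. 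Applying this with the appropriate $g_\alpha$ expresses $W_{\pi(R^\alpha)\widetilde u}(\widetilde u)$ and, after using $L(X)W_{\widetilde u}(\widetilde u)=-W_{\widetilde u}(\pi(X)\widetilde u)$ and $R(X)W_{\widetilde u}(\widetilde u)=W_{\pi(X)\widetilde u}(\widetilde u)$, also the left and right derivatives $L^\alpha\phi$ and $R^\alpha\phi$ of the reproducing kernel $\phi=W_{\widetilde u}(\widetilde u)$ in the same form $h_2*W_u(u)*h_1$. The convolution relation \ref{d1} then follows from the Duflo--Moore cross relations, and \ref{r2},\ref{r3} from this explicit kernel.

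With these identities in hand I would verify the continuity requirement \ref{d3} together with the hypotheses of Theorems~\ref{thm:5} and~\ref{thm:6}. Since convolution with $W_u(u)$ is continuous on $B$ by assumption, and since two--sided invariance of $B$ together with \eqref{eq:3} makes left and right convolution with compactly supported smooth functions bounded on $B$, the signed convolutions $F\mapsto F*W_{\pi(R^\alpha)\widetilde u}(\widetilde u)$ factor as compositions of such bounded maps and are therefore continuous, giving \ref{d3}; thus $\widetilde u$ satisfies Assumption~\ref{as:2} and Theorem~\ref{thm:7} yields the frame $\{\pi(x_i)\widetilde u\}$ for $\Co_S^{\widetilde u}B$. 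For the atomic decompositions one also needs the absolute--value convolutions $f\mapsto|f|*|L^\alpha\phi|$ and $f\mapsto f*|R^\alpha\phi|$ to be continuous, and I expect this to be the main obstacle: the pointwise bound $|h_2*W_u(u)*h_1|\le|h_2|*|W_u(u)|*|h_1|$ combined with solidity of $B$ reduces the problem to controlling convolution by $|W_u(u)|$ against compactly supported factors, which must be extracted from the (signed) assumption on $W_u(u)$ and the two--sided invariance of $B$ rather than from any integrability of the kernel.

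Finally, to identify the spaces, the identity $W_{\widetilde u}(v')(x)=\int \overline{g(y)}\,W_u(v')(xy)\,dy$ exhibits $W_{\widetilde u}(v')$ as a right convolution of $W_u(v')$ with a compactly supported function, giving $\|W_{\widetilde u}(v')\|_B\le C\|W_u(v')\|_B$; the reverse inequality comes from the cross--reproducing relation $W_u(v')=c^{-1}W_{\widetilde u}(v')*W_{\widetilde u}(u)$, whose kernel $W_{\widetilde u}(u)$ is again of the form $h_2*W_u(u)*h_1$ and hence induces a bounded operator on $B$. This proves $\Co_S^u B=\Co_S^{\widetilde u}B$ with equivalent norms, so that the frame from Theorem~\ref{thm:7} and the atomic decompositions from Theorems~\ref{thm:5} and~\ref{thm:6} transfer to $\Co_S^u B$, completing the argument.
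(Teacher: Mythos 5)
Your proposal follows essentially the same route as the paper's proof: both arguments rest on the factorization of every relevant kernel as $h_2*W_u(u)*h_1$ with $h_1,h_2\in C_c^\infty(G)$ (using $\pi(R^\alpha)\pi(g)u=\pi(g_\alpha)u$), on the boundedness of one-sided convolution with compactly supported functions that comes from the two-sided invariance \eqref{eq:3}, and on the assumed continuity of $F\mapsto F*W_u(u)$; the norm equivalence of $\Co_S^uB$ and $\Co_S^{\widetilde u}B$ is obtained identically, via the cross reproducing formulas extended by Lemma~\ref{lem:4}. The only real difference is organizational: the paper does not check all of Assumption~\ref{as:2} and pass through Theorem~\ref{thm:7}; it verifies directly that $R^\alpha W_{\widetilde u}(v')=W_u(v')*(R^\alpha g)^\vee$ is bounded by $\|W_{\widetilde u}(v')\|_B$ and then invokes Theorem~\ref{thm:3} together with Theorem~\ref{thm:8}, which is slightly more economical but has the same content. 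The obstacle you flag for the atomic decompositions is genuine: Theorems~\ref{thm:5} and~\ref{thm:6} require continuity of $f\mapsto|f|*|L^\alpha\phi|$ and $f\mapsto f*|R^\alpha\phi|$, and the pointwise bound $|h_2*W_u(u)*h_1|\le|h_2|*|W_u(u)|*|h_1|$ only reduces this to boundedness of convolution with $|W_u(u)|$, which does not follow from the signed hypothesis on $W_u(u)$. Be aware, however, that the paper does not close this gap either --- its proof treats the frame statement in detail and dismisses the atomic decompositions with ``the statements follow similarly'' --- so your hesitation there reflects a genuine weak point of the theorem as stated rather than a defect of your argument relative to the paper's.
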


\begin{proof}
  Note, that if $f,g\in C_c^\infty(G)$ then
  admissible $u$ and $v\in H$ we have
  \begin{equation*}
    W_{\pi(f)u}(\pi(g)v)
    = g*W_u(u)*(f^\vee)
  \end{equation*}
  Since $g*L^2(G)*g^\vee \subseteq L^2(G)$ we thus see that
  any non-zero G{\aa}rding vector $\widetilde{u}=\pi(g)u$ is also
  admissible. Therefore we can normalize $\pi(g)u$ such
  that the reproducing formula 
  \begin{equation*}
    W_{\widetilde{u}}(v)*W_{\widetilde{u}}(\widetilde{u})
    =  W_{\widetilde{u}}(v)
  \end{equation*}
  is true. From now on let $\widetilde{u}$ be normalized accordingly.
  Further
  \begin{align*}
    \int_G F(x)W_{\pi(g)u}(v)(x^{-1})\, dx
    &= \int_G F(x)\ip{v}{\pi(x^{-1})\pi(g)u}\, dx \\
    &= \int_G \int_G F(x)\ip{v}{\pi(x^{-1})\pi(y)u}\overline{g(y)} \, dy \, dx \\
    &= \int_G \int_G F(yx)\ip{v}{\pi(x^{-1})u}\overline{g(y)} \, dy \, dx \\
    &= \int_G \int_G F(y^{-1}x)\ip{v}{\pi(x^{-1})u}\overline{g^\vee(y)} \, dy \, dx\\
    &= \int_G g^**F(x) W_{u}(v)(x)\, dx
  \end{align*}
  where $g^*(x) = \overline{g(y^{-1})}$.
  Since $g^**F \in Y$ 
  and depends continuously on $F$ (in the sense of Bochner integrals)
  the mapping 
  \begin{equation*}
    (F,v) \mapsto \int_G F(x)W_{\pi(g)u}(v)(x^{-1})\, dx
  \end{equation*}
  is continuous.
  This shows that
  $\Co_S^{\pi(g)u} B$ is a well-defined $\pi^*$-invariant Banach space.

  We now show that the norms on $\Co_S^{u} B$ and $\Co_S^{\pi(g)u} B$
  are equivalent. By the square integrability
  it follows that for any $v\in H$
  \begin{equation*}
    W_u(v)*W_{\widetilde{u}}(u) 
    = C_u W_{\widetilde{u}}(v) 
  \end{equation*}
  and
  \begin{equation*}
    W_{\widetilde{u}}(v)*W_{u}({\widetilde{u}})
    = C_{\widetilde{u}} W_{u}(v) 
  \end{equation*}
  These two formulas can be extended to all $v\in S^*=H_\pi^\infty$
  by Lemma~\ref{lem:4}.
  Since 
  \begin{align*}
    F\mapsto F*W_{u}(\pi(g)u)
    &= F*g*W_u(u)
    \\
    F\mapsto F*W_{\widetilde{u}}(u)
    &= F*W_u(u)*(g^\vee)
  \end{align*}
  are both continuous mappings
  it follows that 
  \begin{align*}
    \| W_u(v) \|_B
    = C \| W_{\widetilde{u}}(v)*W_{u}({\widetilde{u}})\|_B
    \leq C \| W_{\widetilde{u}}(v) \|_B
  \end{align*}
  and 
  \begin{align*}
    \| W_{\widetilde{u}}(v) \|_B
    = C \| W_{u}(v)* (g^\vee)\|_B
    \leq C \| W_u(v) \|_B
  \end{align*}
  Thus the norms on $\Co_S^{u} B$ and $\Co_S^{\pi(g)u} B$
  are equivalent.

  Finally we need to show that we can reconstruct $\phi\in \Co_S^{\pi(g)u} B$
  from it samples. For this it suffices to show that
  \begin{equation*}
    \| R^\alpha W_{\widetilde{u}}(\phi)\|_B \leq C \| W_{\widetilde{u}}(\phi)\|_B 
  \end{equation*}
  and apply Theorem~\ref{thm:3} and for example Theorem~\ref{thm:8}.
  Note, that 
  \begin{equation*}
    R^\alpha W_{\widetilde{u}}(\phi)
    = W_{u}(\phi)*(R^\alpha g)^\vee
  \end{equation*}
  By the continuity of convolution with $R^\alpha g$ we thus see
  that 
  $\| R^\alpha W_{\widetilde{u}}(\phi)\|_B 
  \leq C \| W_{u}(\phi) \|_B$
  and the previously proven norm equivalence gives
  $$\| R^\alpha W_{\widetilde{u}}(\phi)\|_B  
  \leq C \| W_{\widetilde{u}}(\phi)\|_B 
  $$
  to finish the proof that $\pi(x_i)\widetilde{u}$ is a frame. 
  The statements about atomic decompositions follow similarly.
\end{proof}

\begin{remark}
  \label{rem:3}
  Note that we need not necessarily work with the smooth vectors
  $H_\pi^\infty$. 
  In the coorbit theory introduced by Feicthinger and Gr\"ochenig
  \cite{Feichtinger1989a} the space
  $$S=H^1_w=\{v\in H \mid W_u(v)\in L^1_w \}$$ is used in the construction
  of coorbit. Here $w$ is a submultiplicative
  weight. In order to obtain sampling theorems they need to choose
  the analyzing vector in the space
  $$B_w = \{ u\in H \mid W_u(u),M_\epsilon^r W_u(u) \in L^1_w \}$$
  For any $u$ with $W_u(u)\in H_w^1$ it follows from our calculations that
  any G{\aa}rding vector $\pi(g)u$ is
  in $B_w$. Thus it is natural to use G{\aa}rding vectors
  in the discretization machinery of Feichtinger and Gr\"ochenig.
\end{remark}

\bibliographystyle{plain}
\bibliography{srkbslie}
\end{document}